\newtheorem{Theorem}{Theorem}[section]
\newtheorem{Lemma}[Theorem]{Lemma}
\newtheorem{Proposition}[Theorem]{Proposition}
\newtheorem{Corollary}[Theorem]{Corollary}
\newtheorem{Remark}[Theorem]{Remark}
\newtheorem{Definition}{Definition}[section]
\newcommand{\Rn}{{\mathbb R}^{N}}
\newcommand{\N}{\mathbb{N}}
\newcommand{\al} {\alpha}
\newcommand{\ga} {\gamma}
\newcommand{\De} {\Delta}
\newcommand{\la} {\lambda}
\newcommand{\pa}{\partial}
\newcommand{\na} {\nabla}
\newcommand{\bdw}{\partial\Omega}
\newcommand{\R}{\mathbb{R}}
\newcommand{\deb}{\rightharpoonup}
\newcommand{\Iom}{\int_{\Omega}}
\begin{document}

\title{Semilinear elliptic PDE's with biharmonic operator \\ and a singular potential}

\author{Mousomi Bhakta\\
 Department of Mathematics,\\ Indian Institute of Science Education and Research,\\
Dr. Homi Bhaba road,  Pune-411008, India.\\
Email: {\it mousomi@iiserpune.ac.in}}

\date{}

\maketitle

\begin{abstract}
\noindent
\footnotesize We study the existence/nonexistence of positive solution to the problem of the type:
\begin{equation}\tag{$P_{\la}$}
\begin{cases}
\Delta^2u-\mu a(x)u=f(u)+\la b(x)\quad\textrm{in $\Omega$,}\\
u>0 \quad\textrm{in $\Omega$,}\\
u=0=\Delta u \quad\textrm{on $\bdw$,}
\end{cases}
\end{equation}
where $\Omega$ is a smooth bounded domain in $\Rn$, $N\geq 5$, $a, b, f$ are nonnegaive functions satisfying certain hypothesis which we will specify later. $\mu,\la$ are positive constants. 
Under some suitable conditions on functions $a, b, f$ and the constant $\mu$, we show that there exists $\la^*>0$ such that when $0<\la<\la^*$, ($P_{\la}$) admits a  solution in $W^{2,2}(\Omega)\cap W^{1,2}_0(\Omega)$ and for $\la>\la^*$, it does not have any solution in $W^{2,2}(\Omega)\cap W^{1,2}_0(\Omega)$. Moreover as $\la\uparrow\la^*$, minimal positive solution of ($P_{\la}$) converges in $W^{2,2}(\Omega)\cap W^{1,2}_0(\Omega)$ to a solution of ($P_{\la^*}$).  We also prove that there exists $\tilde{\la}^*<\infty$ such that $\la^*\leq\tilde{\la}^*$ and  for $\la>\tilde{\la}^*$, the above problem ($P_{\la}$) does not have any solution even in the distributional sense/very weak sense and there is complete {\it blow-up}. Under an additional integrability condition on $b$, we establish the uniqueness of positive solution of ($P_{\la^*}$) in $W^{2,2}(\Omega)\cap W^{1,2}_0(\Omega)$.

\bigskip

\noindent
\textbf{Keywords:} {Semilinear biharmonic equations, singular potential, navier boundary condition, existence/nonexistence results, blow-up phenomenon, stability results, uniqueness of extremal solution}

\end{abstract}
\footnote{
\textit{2010 Mathematics Subject Classification:} { 35B09, 35B25, 35B35, 35G30, 35J91.}}

\section{Introduction}

In this article  we  study  the semilinear fourth order elliptic problem with singular potential:
\begin{equation}\tag{$P_{\la}$}
\begin{cases}
\Delta^2u-\mu a(x)u=f(u)+\la b(x)\quad\textrm{in $\Omega$,}\\
u>0 \quad\textrm{in $\Omega$,}\\
u=0=\Delta u \quad\textrm{on $\bdw$,}
\end{cases}
\end{equation}
where
$\Delta^2 u=\Delta(\Delta u)$, $\Omega$ is a smooth bounded domain in $\Rn$, $N\geq 5$. $a, b, f$ are nonnegative functions. $a\in L^1_{loc}(\Omega)$, $ b\in L^2(\Omega)$, $b\not\equiv 0$. $\mu, \la$ are (small) positive constants.   
We assume that
\begin{equation}\label{assum-f}
f:\R^{+}\to\R^{+} \quad\text{is a convex} \quad C^1 \quad\text{function with}\quad f(0)=0=f'(0) 
\end{equation}
and satisfying the following growth conditions:
\begin{equation}\label{assum-f1}
\lim_{t\to\infty}\frac{f(t)}{t}=\infty,
\end{equation}
\begin{equation}\label{assum-f2}
\int_{1}^{\infty}g(s)ds<\infty \quad\text{and}\quad sg(s)<1 \quad\text{for}\quad s>1, 
\end{equation}
where we define, for $s\geq 1$,
\begin{equation}\label{def-g}
g(s)=\sup_{t>0}\frac{f(t)}{f(ts)}.
\end{equation}
It is easy to see that $g$ is nonincreasing, nonnegative function. Since by convexity $t\to\frac{f(t)}{t}$ is increasing and $f(0)=0$, it follows that $s\to sg(s)$ is nonincreasing.

From literature we know that the usual norm in $W^{k,p}(\Omega)$ is $\displaystyle\left(\int_{\Omega}\sum_{0\leq|\alpha|\leq k}|D^{\alpha}u|^p dx\right)^\frac{1}{p}$. Thanks to interpolation theory, one can neglect intermediate derivatives and see that 
\begin{equation}\label{norm-1}
||u||_{W^{k,p}(\Omega)}=\displaystyle\left(\Iom|u|^p dx+\Iom|D^k u|^p dx\right)^\frac{1}{p},
\end{equation}
 defines a norm which is equivalent to the usual norm in $W^{k,p}(\Omega)$ (see \cite{A}). As $\Omega$ is a smooth bounded domain and $W^{k,p}_0(\Omega)$ is the closure of $C_0^{\infty}(\Omega)$ w.r.t. the norm in  $W^{k,p}(\Omega)$, invoking \cite[Theorem 2.2]{GGS} we find that
\begin{equation}\label{norm-2}
||u||_{W^{k,p}_0(\Omega)}=\displaystyle\left(\Iom| D^k u|^p dx\right)^\frac{1}{p},
\end{equation}
defines an equivalent norm to \eqref{norm-1}. Now onwards we will consider $W^{k,p}_0(\Omega)$ endowed with the norm defined in \eqref{norm-2}. The inner product in $W^{2,2}(\Omega)\cap W^{1,2}_0(\Omega)$ is defined by
$$(u, v)_{W^{2,2}(\Omega)\cap W^{1,2}_0(\Omega)}=\Iom\Delta u\Delta v dx,$$ which induces the norm 
\begin{equation}\label{norm-3}
||u||_{W^{2,2}(\Omega)\cap W^{1,2}_0(\Omega)}=|\Delta u|_{L^2(\Omega)},
\end{equation}
equivalent to \eqref{norm-2} with $k=p=2$ (for details see \cite{GGS}, \cite{PP}).

We assume $a\in L^1_{loc}(\Omega)$ satisfies the following condition: there exists a positive constant $\ga>0$ such that
\begin{equation}\label{assum-a'}
\Iom \displaystyle\left(|\Delta u|^2-a(x)^2u^2\right)dx\geq \ga\Iom u^2 \quad\forall\quad u\in C^{\infty}_0(\Omega).
\end{equation}
Using Fatou's lemma and the standard density argument, it is easy to check that  \eqref{assum-a'} holds for every $u\in W^{2,2}\cap W^{1,2}_0(\Omega)$. Therefore we write
\begin{equation}\label{assum-a}
\Iom \displaystyle\left(|\Delta u|^2-a(x)^2u^2\right)dx\geq \ga\Iom u^2 \quad\forall\quad u\in W^{2,2}\cap W^{1,2}_0(\Omega).
\end{equation}
In addition, 
\begin{equation}\label{assum-mu}
0<\mu<\sqrt{\ga}.
\end{equation}
Using \eqref{assum-a'} and \eqref{assum-mu} it follows that
\begin{equation}\label{mu-ga}
\mu\Iom a(x)u^2 dx\leq\mu\displaystyle\left(\Iom a(x)^2u^2 dx\right)^\frac{1}{2}\left(\Iom u^2 dx\right)^\frac{1}{2}\leq\frac{\mu}{\sqrt{\ga}}|\Delta u|^2_{L^2(\Omega)} \quad\forall\quad u\in C^{\infty}_0(\Omega).
\end{equation}
Therefore,
$$||u||_{H}^2 :=\Iom\displaystyle\left[|\Delta u|^2-\mu a(x)u^2\right]dx, $$ is a norm in $C^{\infty}_0(\Omega)$ and completion of $C^{\infty}_0(\Omega)$ with respect to this norm yields the Hilbert space $H$. By \eqref{mu-ga}, \eqref{assum-mu} and \eqref{norm-2}, it follows that $||u||_H$ is equivalent to $||u||_{W^{2,2}_0(\Omega)}$. Thanks to \eqref{mu-ga},  the norm equivalence established above and the Poincare inequality, there exists $\tilde{\ga}>0$ such that 
\begin{equation}\label{tilde-ga}
\Iom\displaystyle\left(|\Delta u|^2-\mu a(x)u^2\right)dx\geq \tilde{\ga}\Iom u^2 dx \quad\forall \   \  u\in \mathcal C^{\infty}_{0}(\Omega).
\end{equation}
\eqref{tilde-ga} implies first eigenvalue of $\Delta^2-\mu a(x)$ is strictly positive.

\vspace{2mm}

We  note that if $a(x)=\frac{\al}{|x|^2}$ where $\al<\bar\al:= \frac{N(N-4)}{4}$, applying the following Rellich inequality (\cite{Rel54}, \cite{Rel69}):
\begin{equation}\label{Rellich}
\int_{\Rn}|\De u|^2dx\geq \displaystyle\bar\al^2\int_{\Rn}|x|^{-4}|u|^2 dx\quad\forall \   \  u\in \mathcal C^{\infty}_{0}(\R^N),
\end{equation}
and the Poincare inequality along with the norm equivalence established above, it is not diffcicult to check that  \eqref{assum-a} holds. When $a(x)=\frac{\bar\al}{|x|^2}$, \eqref{assum-a} is the improved Hardy-Rellich inequality (see \cite{GGM}, \cite{TZ}). However, if $\al>\bar\al$, and $0\in\Omega$, \eqref{assum-a} fails to hold and there is no $u\not\equiv 0$ such that $\Delta^2u-a(x)^2u\geq 0$,

\begin{Definition}\label{d: sol}
We say that $u\in W^{2,2}(\Omega)\cap W^{1,2}_0(\Omega)$ is a solution of $(P_{\la})$ if $u>0$ a.e. ,
$f(u)\in L^2(\Omega)$ and $u$ satisfies the following: 
$$\Iom \displaystyle\left( \Delta u \Delta\phi-\mu a(x)u\phi\right)dx=\Iom\left(f(u)+\la b(x)\right)\phi\ dx\quad\forall\quad \phi\in W^{2,2}(\Omega)\cap W^{1,2}_0(\Omega).$$
Similarly $u\in W^{2,2}(\Omega)\cap W^{1,2}_0(\Omega)$ is called a supersolution (\it subsolution) if $f(u)\in L^2(\Omega)$ and for all positive $\phi\in W^{2,2}(\Omega)\cap W^{1,2}_0(\Omega)$,
$$\Iom \displaystyle\left( \Delta u \Delta\phi-\mu a(x)u\phi\right)dx\geq (\leq)\Iom\left(f(u)+\la b(x)\right)\phi\ dx.$$
\end{Definition}

\begin{Definition}\label{d:weak sol}
We say that $u\in L^1(\Omega)$ is a distributional solution or very weak solution of $(P_{\la})$ if $u>0$ a.e. , \\
$\mu a(x)u+f(u)\in L^1_{loc}(\Omega)$ and $u$ satisfies ($P_{\la}$) in the distributional sense, i.e., 
$$\Iom \displaystyle u\left(\Delta^2 \phi-\mu a(x)\phi\right) dx=\Iom\left(f(u)+\la b(x)\right)\phi\ dx \quad\forall\quad \phi\in C^{\infty}_{0}(\Omega).$$
\end{Definition}

Similar type of problem  with the Laplace operator  in much more generalized sense was extensively studied by Dupaigne and Nedev in \cite{DN}. In \cite{DN}, the authors have proved an necessary and sufficient condition for the existence of $L^1$ solution and they have also established an estimate from above and below for the solution. We also refer \cite{BDT}, \cite{BV}, \cite{D} (and the references therein) for the related problems in the second order case.

Higher order problem is quite different compared to the second order case. In this case a possible failure of the maximum principle causes
several technical difficulties. Possibly because of this reason the knowledge on higher order nonlinear problems is far from being reasonably complete, as it is in the second order case. In the case of fourth order problem Navier boundary conditions play an important role to prove existence
results as under this boundary condition, equation with biLaplacian operator can be rewritten as a second order system with Dirichlet boundary value problems. Then using classical elliptic theory,  one can easily prove a Maximum Principle.  As a consequence,  one can deduce a Comparison Principle which plays as one of the key factor in proving existence results. In a recent work \cite{PP}, an equation similar to ($P_{\la}$) with $a(x)=\frac{1}{|x|^4}$ and $f(u)=u^p$ has been studied. More precisely, in \cite{PP} the authors have studied the optimal power p for existence/nonexistence of distributional solutions. In recent years there are many papers dealing with $W^{2,2}(\Omega)\cap W^{1,2}_0(\Omega)$ solution of semilinear elliptic and parabolic problem with biLaplacian operator and some specific nonlinearities. We quote a few among them \cite{AGGM}, \cite{BG}, \cite{DDGM}, \cite{EGP} (also see the references therein). Semilinear elliptic equations with biharmonic operator arise in continuum mechanics, bio- physics, differential geometry. In particular in the modeling of thin elastic plates, clamped
plates and in the study of the Paneitz-Branson equation and the Willmore equation (see \cite{GGS} and the references therein for more details).

The paper is organized as follows:

In Section 2 we recall some useful lemmas from \cite{PP} and prove some important lemmas regarding existence. In Section 3 we prove our main existence result.  More precisely, under some hypothesis on $f$, we prove there exists $\la^{*}>0$ such that if $0<\la<\la^{*}$, problem ($P_{\la}$) has a minimal solution $u_{\la}$ in $W^{2,2}(\Omega)\cap W^{1,2}_0(\Omega)$. Moreover, if $\la>\la^{*}$, then ($P_{\la}$) does not have any solution which belongs to $W^{2,2}(\Omega)\cap W^{1,2}_0(\Omega)$. Under an additional mild growth condition on $f$ at infinity, we also prove when $\la\uparrow \la^*$, there exists $u^*\in W^{2,2}(\Omega)\cap W^{1,2}_0(\Omega)$ such that minimal solution $u_{\la}$ of $(P_{\la})$ converges to $u^*$ in $W^{2,2}(\Omega)\cap W^{1,2}_0(\Omega)$ and $u^*$ happens to be a solution of $(P_{\la^*})$. Section 4  deals with the case for which ($P_{\la}$) does not have any solution even in the very weak sense. In this case we establish {\it complete blow-up} phenomenon (see Definition \ref{d:blow-up}). Section 5 is devoted to the stability result where the minimal positive solution in $W^{2,2}(\Omega)\cap W^{1,2}_0(\Omega)$ already exists. In this section, under some better integrability condition on $b$, we also prove $(P_{\la^*})$ has a unique solution in $W^{2,2}(\Omega)\cap W^{1,2}_0(\Omega)$.

\section{Some important Lemmas}
 
\begin{Definition}
We say that $u\in L^1(\Omega)$ is a weak supersolution (subsolution) to
$$\Delta^2 u=g(x,u) \quad\text{in}\quad\Omega,$$
in the sense of distribution if $g(x,u)\in L^1(\Omega)$ and for all positive $\phi\in C^{\infty}_{0}(\Omega)$, we have
$$\Iom u\Delta^2 \phi dx\geq(\leq)\Iom g(x, u)\phi dx.$$
If $u$ is a weak supersolution and as well a weak subsolution in the sense of distribution, then we say that $u$ is a distributional solution.
\end{Definition}

Next we recall three important lemmas from \cite{PP} which we will use frequently in this paper.
\begin{Lemma}\label{SMP}({\textit Strong Maximum Principle}).
Let $u$ be a nontrivial supersolution to
\begin{equation}\label{eq:smp}
\begin{cases}
\Delta^2 u=0 \quad\text{in}\quad\Omega,\\
u=0=\Delta u \quad\text{on}\quad\bdw.
\end{cases}
\end{equation}
Then $-\Delta u>0$ and $u>0$ in $\Omega$.
\end{Lemma}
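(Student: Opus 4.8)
The plan is to use exactly the device emphasized in the introduction: under Navier boundary conditions the biharmonic problem decouples into a cascade of two second-order Dirichlet problems, after which the classical (second-order) strong maximum principle can be applied twice. Concretely, I would set $w:=-\Delta u$, understood in the sense of distributions. By the definition of a weak supersolution we have $\Iom u\,\Delta^2\phi\,dx\geq 0$ for every nonnegative $\phi\in C^\infty_0(\Omega)$, and since $\Delta^2 u=-\Delta w$ in $\mathcal D'(\Omega)$, this says precisely that $w$ is a distributional supersolution of $-\Delta w=0$, i.e. $w$ is superharmonic. The Navier condition $\Delta u=0$ on $\bdw$ encodes the homogeneous boundary datum $w=0$ on $\bdw$. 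Thus the single fourth-order inequality is equivalent to the pair $-\Delta w=\Delta^2 u\geq 0$ with $w=0$ on $\bdw$, together with $-\Delta u=w$ with $u=0$ on $\bdw$.

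Next I would turn the distributional statements into genuine pointwise ones by a Green-potential representation, which is the step I expect to be the main obstacle because $u$ is only assumed to lie in $L^1(\Omega)$ and so $w$ is a priori merely a distribution. Writing $G$ for the positive Dirichlet Green function of $-\Delta$ on $\Omega$, the nonnegativity of the distribution $\Delta^2 u$ together with $w=0$ on $\bdw$ gives the representation $w=G[\Delta^2 u]\geq 0$, exhibiting $w$ as the Green potential of a nonnegative distribution; in particular $w$ is a bona fide nonnegative superharmonic function. I would then rule out the trivial case: if $w\equiv 0$ then $-\Delta u=0$ with $u=0$ on $\bdw$, forcing $u\equiv 0$ and contradicting the nontriviality of $u$. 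Hence $w\not\equiv 0$, and the second-order strong maximum principle applied to the superharmonic $w$ with zero boundary values yields $w=-\Delta u>0$ throughout $\Omega$.

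Finally I would feed this back into the lower equation. Now $u=G[w]$ with $w>0$ in $\Omega$, so $u$ is itself a nonnegative superharmonic function (indeed $-\Delta u=w>0$) with $u=0$ on $\bdw$ and $u\not\equiv 0$. A second application of the strong maximum principle then gives $u>0$ in $\Omega$, which together with $-\Delta u>0$ completes the proof. The only genuinely delicate point is the regularity bookkeeping in the first passage from the distributional supersolution inequality to the pointwise strict inequality $w>0$; the positivity of the Dirichlet Green function and the corresponding theory of superharmonic functions (equivalently, the positivity-preserving property of $\Delta^2$ under Navier conditions) are exactly what make this rigorous, and everything else reduces to two invocations of the standard second-order strong maximum principle.
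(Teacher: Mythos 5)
Your proposal is correct, but note that the paper does not actually prove this lemma itself --- it simply cites \cite[Lemma 3.2]{PP}. Your argument (decoupling the Navier biharmonic problem into a cascade of two second-order Dirichlet problems via $w=-\Delta u$ and applying the classical strong maximum principle twice, with the Green-potential representation handling the passage from distributional to pointwise positivity) is exactly the standard device the paper's introduction describes and the one the cited lemma relies on, so it is essentially the same approach.
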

\begin{proof}
See \cite[Lemma 3.2]{PP}.
\hfill$\square$
\end{proof}
\begin{Lemma}\label{comp prin}({\textit Comparison Principle}).
Let $u$ and $v$ satisfy the following prob:
\begin{equation}\label{eq:cp}
\begin{cases}
\Delta^2 u\geq \Delta^2 v \quad\text{in}\quad\Omega,\\
u\geq v \quad\text{on}\quad\bdw,\\
-\Delta u\geq -\Delta v \quad\text{on}\quad\bdw.
\end{cases}
\end{equation}
Then, $-\Delta u\geq -\Delta v$ and $u\geq v$ in $\Omega$.
\end{Lemma}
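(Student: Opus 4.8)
The plan is to reduce the fourth-order comparison to two successive applications of the second-order maximum principle, exploiting the fact that the Navier boundary conditions decouple the biharmonic problem into a cascade of Laplace problems. First I would set $w := u - v$. By linearity of $\Delta^2$ and $\Delta$, the hypotheses \eqref{eq:cp} become $\Delta^2 w \geq 0$ in $\Omega$, together with $w \geq 0$ and $-\Delta w \geq 0$ on $\bdw$; the two conclusions to be proved are then simply $-\Delta w \geq 0$ and $w \geq 0$ in $\Omega$.

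The central step is to introduce the auxiliary function $z := -\Delta w$. Since $\Delta^2 w = \Delta(\Delta w) = -\Delta z$, the inequality $\Delta^2 w \geq 0$ is equivalent to $-\Delta z \geq 0$ in $\Omega$, that is, $z$ is a (weak) supersolution of the Laplace equation. Combined with the boundary datum $z = -\Delta w \geq 0$ on $\bdw$, the classical weak maximum principle for $-\Delta$ yields $z \geq 0$ throughout $\Omega$. This is precisely the first assertion $-\Delta w \geq 0$, i.e. $-\Delta u \geq -\Delta v$ in $\Omega$. For the second assertion I would feed this information back into the first-order level: having established $-\Delta w \geq 0$ in $\Omega$, the function $w$ is itself superharmonic, and by hypothesis $w \geq 0$ on $\bdw$; a second application of the weak maximum principle then gives $w \geq 0$ in $\Omega$, i.e. $u \geq v$, completing the argument.

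The main obstacle is not the algebraic decomposition but the regularity bookkeeping needed to make each maximum-principle step rigorous in the weak/distributional framework in which the hypotheses are posed. I would need to confirm that $z = -\Delta w$ is a well-defined element of a space (for instance $W^{1,2}$ or a suitable $L^p$) for which the second-order maximum principle is available, that the distributional inequality $\Delta^2 w \geq 0$ genuinely transfers to $-\Delta z \geq 0$ tested against nonnegative functions, and that the boundary traces $w|_{\bdw}$ and $(-\Delta w)|_{\bdw}$ are meaningful. Under the Navier boundary conditions and standard elliptic regularity these points are routine, but they are exactly where the fourth-order problem must be handled with care, since — as noted in the introduction — the naive maximum principle can fail for $\Delta^2$ in the absence of this reduction to a second-order system.
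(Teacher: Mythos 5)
Your argument is correct and is exactly the intended one: the paper itself does not spell out a proof but simply cites \cite[Lemma 3.3]{PP}, and the mechanism there (and the one advertised in the paper's introduction) is precisely your reduction of the Navier problem to a cascade of two Dirichlet problems for $-\Delta$ followed by two applications of the second-order weak maximum principle. Your closing remarks on the regularity bookkeeping are the only substantive content beyond the citation, and they are handled routinely in the framework of \cite{PP}.
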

\begin{proof}
See \cite[Lemma 3.3]{PP}.
\hfill$\square$
\end{proof}
\begin{Lemma}\label{WHP}({\textit Weak Harnack Principle})\cite[Lemma 3.4]{PP}.\\
Let $u$ be a positive distributional supersolution to  \eqref{eq:smp}. Then for any $B_R(x_0)\Subset\Omega$, there exists a positive constant $C=C(\theta, \rho, q, R)$, $0<q<\frac{N}{N-2}$, $0<\theta<\rho<1$, such that
$$||u||_{L^q(B_{\rho R}(x_0))}\leq C \text{ess} \inf_{B_{\theta R}(x_0)}u.$$
\end{Lemma}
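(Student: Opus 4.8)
Although Lemma \ref{WHP} is quoted from \cite{PP}, the natural way to establish it is to exploit the decoupling of the Navier problem into a second-order system and then invoke potential theory. The plan is as follows. Given a positive distributional supersolution $u$ of \eqref{eq:smp}, I would first set $v=-\De u$ and read the biharmonic problem as the cascade $-\De v=\De^2u\ge 0$ in $\Omega$ with $v=0$ on $\bdw$, followed by $-\De u=v$ in $\Omega$ with $u=0$ on $\bdw$. The Dirichlet maximum principle for $-\De$ (equivalently, the content of Lemma \ref{SMP}) forces $v\ge 0$, so that $-\De u=v\ge 0$ in the distributional sense; i.e. $u$ is a nonnegative superharmonic function on $\Omega$, and all fourth-order structure has disappeared. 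It then suffices to prove the classical weak Harnack inequality for nonnegative superharmonic functions.

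For this, fix $B_R(x_0)\Subset\Omega$, put $\nu:=-\De u\ge0$ (a nonnegative Radon measure on $B_R(x_0)$), and choose an auxiliary radius $\sigma$ with $\rho<\sigma<1$. On $B_R(x_0)$ I would use the Riesz representation $u=G_R\nu_1+G_R\nu_2+h$, where $G_R$ is the Green function of the ball, $\nu_1=\nu|_{B_{\sigma R}(x_0)}$, $\nu_2=\nu|_{B_R(x_0)\setminus B_{\sigma R}(x_0)}$, and $h$ is the Poisson integral of the boundary trace of $u$. Since $\nu_2$ is supported outside $B_{\rho R}(x_0)$, both $G_R\nu_2$ and $h$ are nonnegative and harmonic on $B_{\rho R}(x_0)$, and both are dominated by $u$; the whole local singularity is carried by the near potential $G_R\nu_1$. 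The $L^q$ norm then splits into a potential part and a harmonic part: for the former, the fundamental solution $|x-y|^{2-N}$ lies in $L^q_{loc}$ exactly when $q(N-2)<N$, i.e. $q<\tfrac{N}{N-2}$ --- which is precisely the stated range of $q$ --- and this yields $\|G_R\nu_1\|_{L^q(B_{\rho R}(x_0))}\le C\,\nu(B_{\sigma R}(x_0))$; for the latter, the classical Harnack inequality for nonnegative harmonic functions gives $\sup_{B_{\rho R}(x_0)}(G_R\nu_2+h)\le C\inf_{B_{\theta R}(x_0)}u$, using $\theta<\rho$ and the domination by $u$.

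The step I expect to be the crux is the mass bound $\nu(B_{\sigma R}(x_0))\le C\,\mathrm{ess\,inf}_{B_{\theta R}(x_0)}u$, which is what couples the potential part to the infimum on the right-hand side. I would obtain it from the pointwise lower bound
\[
u(x)\ \ge\ \int_{B_{\sigma R}(x_0)}G_R(x,y)\,d\nu(y)\ \ge\ c_0\,\nu(B_{\sigma R}(x_0)),\qquad x\in B_{\theta R}(x_0),
\]
where $c_0:=\inf\{G_R(x,y):x\in B_{\theta R}(x_0),\,y\in B_{\sigma R}(x_0)\}>0$ is a positive constant depending only on $N,\theta,\sigma,R$ (the Green kernel is strictly positive and bounded below on this compact pair of sets interior to $B_R(x_0)$). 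Assembling the potential estimate, the harmonic estimate, and this mass bound gives $\|u\|_{L^q(B_{\rho R}(x_0))}\le C\,\mathrm{ess\,inf}_{B_{\theta R}(x_0)}u$ with $C=C(\theta,\rho,q,R)$, as claimed. It is worth emphasising that the distributional (merely $L^1$) regularity of the supersolution is exactly why one runs this potential-theoretic argument in place of Moser iteration, the latter requiring $u\in W^{1,2}_{loc}(\Omega)$ which is not available here.
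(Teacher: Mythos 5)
The paper offers no proof of this lemma at all --- it is quoted verbatim from \cite[Lemma 3.4]{PP} --- so there is no internal argument to measure yours against; what you have written is, in effect, a reconstruction of the cited source's proof, and it is correct. The route is the standard one: decouple the Navier problem into the second-order cascade, use the boundary condition $\Delta u=0$ on $\bdw$ together with the maximum principle (i.e.\ Lemma \ref{SMP}) to conclude that $u$ is a nonnegative superharmonic function, and then run the classical potential-theoretic weak Harnack argument: Riesz decomposition $u=G_R\nu_1+G_R\nu_2+h$ on $B_R(x_0)$, the bound $\|G_R\nu_1\|_{L^q(B_{\rho R}(x_0))}\leq C\,\nu(B_{\sigma R}(x_0))$ via Minkowski's integral inequality and the fact that $|x-y|^{2-N}\in L^q_{loc}$ exactly for $q<\tfrac{N}{N-2}$ (which explains the stated range of $q$), Harnack's inequality for the nonnegative harmonic remainder $G_R\nu_2+h\leq u$, and the mass bound $\nu(B_{\sigma R}(x_0))\leq c_0^{-1}\,\mathrm{ess\,inf}_{B_{\theta R}(x_0)}u$ from the strict positivity of the Green kernel on $\overline{B_{\theta R}}\times\overline{B_{\sigma R}}$. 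All of these steps are sound, and your closing remark is apt: Moser iteration is unavailable at $L^1$ regularity, which is why the potential-theoretic argument is the right one. The only point I would insist you make explicit is the first reduction: for a general positive $u\in L^1(\Omega)$, $\Delta^2u\geq 0$ in $\mathcal{D}'(\Omega)$ does \emph{not} imply $-\Delta u\geq 0$ (take $\Delta u\equiv 1$), and for a merely distributional supersolution the statement ``$v=-\Delta u=0$ on $\bdw$'' has no pointwise meaning; the clean way to get superharmonicity is to invoke Lemma \ref{SMP} outright, as you do parenthetically, since that lemma is precisely the place where the Navier boundary data on $\Delta u$ is exploited for $L^1$ supersolutions.
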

\begin{Lemma}\label{l:ex}
Let $a\in L^1_{loc}(\Omega)$, $b\in L^2(\Omega)$ , $a, b\geq 0$ a.e., $b\not\equiv 0$, $\mu$ is a positive constant satisfying \eqref{assum-mu} and $a$ satisfies \eqref{assum-a}.  Then the equation
\begin{equation}\label{eq-1}
\begin{cases}
\Delta^2 u-\mu a(x)u=b \quad\text{in}\quad\Omega,\\
u=0=\Delta u \quad\text{on}\quad\bdw,
\end{cases}
\end{equation}
has a positive solution $u\in W^{2,2}(\Omega)\cap W^{1,2}_0(\Omega)$. 
\end{Lemma}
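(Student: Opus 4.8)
The plan is to obtain the solution via the direct method of the calculus of variations. Since $a$ satisfies \eqref{assum-a} and $\mu$ satisfies \eqref{assum-mu}, the bilinear form
$$B(u,v)=\Iom\displaystyle\left(\Delta u\Delta v-\mu a(x)uv\right)dx$$
is, by the norm-equivalence discussion surrounding \eqref{tilde-ga}, an inner product on $H$ whose induced norm $\|\cdot\|_H$ is equivalent to $\|\cdot\|_{W^{2,2}_0(\Omega)}$. I would therefore work in the Hilbert space $H=W^{2,2}(\Omega)\cap W^{1,2}_0(\Omega)$ and consider the functional
$$J(u)=\tfrac12\Iom\displaystyle\left(|\Delta u|^2-\mu a(x)u^2\right)dx-\Iom b\,u\,dx=\tfrac12\|u\|_H^2-\Iom b\,u\,dx,$$
whose critical points are exactly the weak solutions of \eqref{eq-1} in the sense of Definition \ref{d: sol} with $f\equiv 0$ and $\la=1$.

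First I would check that $J$ is well-defined and coercive. The linear term $\Iom b\,u\,dx$ is controlled by Cauchy--Schwarz and the continuous embedding $H\hookrightarrow L^2(\Omega)$ (which follows from the Poincar\'e inequality together with $b\in L^2(\Omega)$), giving
$$\left|\Iom b\,u\,dx\right|\leq |b|_{L^2(\Omega)}\,|u|_{L^2(\Omega)}\leq C\,|b|_{L^2(\Omega)}\,\|u\|_H.$$
Hence $J(u)\geq \tfrac12\|u\|_H^2-C\,|b|_{L^2(\Omega)}\|u\|_H\to\infty$ as $\|u\|_H\to\infty$, so $J$ is coercive and bounded below. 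Next I would note that $J$ is strictly convex (the quadratic part is a genuine norm-squared by \eqref{tilde-ga} and the linear part is affine) and weakly lower semicontinuous, since $u\mapsto\tfrac12\|u\|_H^2$ is w.l.s.c. as the square of a norm and the linear functional $u\mapsto\Iom b\,u\,dx$ is weakly continuous. The direct method then yields a unique minimizer $u\in H$, which is the desired weak solution; existence and uniqueness both follow, or alternatively one may simply invoke the Lax--Milgram theorem, since $B$ is a bounded coercive bilinear form and $u\mapsto\Iom b\,u\,dx$ is a bounded linear functional on $H$.

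The remaining and genuinely nontrivial step is positivity: I must show the solution satisfies $u>0$ in $\Omega$. Here I would exploit the Navier boundary conditions, which let me rewrite the fourth order problem as the second order system $-\Delta u=w$, $-\Delta w=\mu a(x)u+b$ with $u=w=0$ on $\bdw$. Because $b\geq0$, $b\not\equiv0$, $a\geq0$ and $\mu>0$, the right-hand side $\mu a(x)u+b$ is nonnegative provided $u\geq0$, so I would argue that $u$ is a nontrivial supersolution of $\Delta^2 u=0$ with Navier data and apply the Strong Maximum Principle, Lemma \ref{SMP}, to conclude $-\Delta u>0$ and $u>0$ in $\Omega$. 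The hard part will be bootstrapping this sign information consistently: a priori the variational solution is only known to lie in $H$, so to make the supersolution argument rigorous I expect to test the equation against $u^-=\max(-u,0)$ (or equivalently compare $u$ with $0$ via Lemma \ref{comp prin}) to first establish $u\geq0$, and only then invoke the strong maximum principle to upgrade to strict positivity. One technical point to handle with care is that $\mu a(x)u$ must be integrable against the relevant test functions despite $a$ being merely $L^1_{loc}$; this is exactly what \eqref{mu-ga} and \eqref{assum-a} are designed to control, so the singular potential term stays absorbed into the equivalent $H$-norm throughout.
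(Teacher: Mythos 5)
Your existence argument is sound but takes a genuinely different route from the paper: you invoke Lax--Milgram (or the direct method) for the coercive bilinear form $B(u,v)=\Iom(\Delta u\Delta v-\mu a(x)uv)\,dx$ on $W^{2,2}(\Omega)\cap W^{1,2}_0(\Omega)$, whereas the paper builds the solution by monotone iteration: $\Delta^2 u_1=b$, then $\Delta^2 u_n=\mu a(x)u_{n-1}+b$ with Navier data, shows $0<u_1\leq u_2\leq\cdots$ by Lemmas \ref{SMP} and \ref{comp prin}, and proves the sequence is Cauchy via the contraction estimate $|\Delta(u_{n+1}-u_n)|_{L^2}\leq\tfrac{\mu}{\sqrt{\ga}}|\Delta(u_n-u_{n-1})|_{L^2}$, which is where \eqref{assum-mu} enters. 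Your coercivity estimate uses \eqref{assum-mu} in exactly the same way, and as a bonus you get uniqueness of the solution in $W^{2,2}(\Omega)\cap W^{1,2}_0(\Omega)$, which the paper does not state here. (One caution: be careful to minimize over $W^{2,2}(\Omega)\cap W^{1,2}_0(\Omega)$ and not over the completion $H$ of $C^\infty_0(\Omega)$, which is $W^{2,2}_0(\Omega)$ and corresponds to the clamped, not the Navier, boundary conditions.)

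The genuine gap is the positivity step, and it is not a technicality. Both devices you propose fail for fourth-order problems. Testing against $u^-=\max(-u,0)$ is not admissible because $W^{2,2}(\Omega)$ is not stable under truncation: $u^-$ has in general a singular part in its second derivatives concentrated on the level set $\{u=0\}$, so $u^-\notin W^{2,2}(\Omega)$ and it cannot be used as a test function. Comparing $u$ with $0$ via Lemma \ref{comp prin} requires $\Delta^2 u=\mu a(x)u+b\geq 0$, which presupposes $u\geq 0$; likewise your claim that $u$ is a ``nontrivial supersolution of $\Delta^2 u=0$'' already assumes the sign you are trying to prove. This circularity is exactly why the paper does not argue variationally: in the iteration each right-hand side $\mu a(x)u_{n-1}+b$ is nonnegative by the induction hypothesis, so Lemma \ref{SMP} applies at every step and positivity of the limit is automatic. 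To repair your proof, the cleanest route is to run the paper's iteration anyway (or, equivalently, to expand $u$ as the Neumann series $\sum_{n\geq 0}\bigl((\Delta^2)^{-1}\mu a\bigr)^n(\Delta^2)^{-1}b$ of nonnegative terms, convergent because $\mu<\sqrt{\ga}$ makes the operator a contraction) and then identify the resulting positive solution with your Lax--Milgram solution by the uniqueness you have already established. As written, the positivity claim is unsupported.
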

\begin{proof}
Given $b\in L^2(\Omega)$, we know there exists unique $u_1\in W^{2,2}\cap W^{1,2}_0(\Omega)$ satisfying the following prob:
\begin{equation*}
\begin{cases}
\Delta^2 u_1=b \quad\text{in}\quad\Omega,\\
u_1=0=\Delta u_1 \quad\text{on}\quad\bdw.
\end{cases}
\end{equation*}
Applying strong maximum principle ( Lemma \ref{SMP}) we get $u_1>0$.
Now define $u_n$ ($n\geq 2$) as follows:
\begin{equation}\label{u-n}
\begin{cases}
\Delta^2 u_n=\mu a(x)u_{n-1}+b \quad\text{in}\quad\Omega,\\
u_n=0=\Delta u_n \quad\text{on}\quad\bdw.
\end{cases}
\end{equation}
By \eqref{assum-a}, we have $\mu a(x)u_{n-1}\in L^2(\Omega)$. This in turn implies the existence of unique $u_n\in W^{2,2}\cap W^{1,2}_0(\Omega)$ which satisfies \eqref{u-n}. Also by comparison principle we have $0<u_1\leq\cdots\leq u_{n-1}\leq u_n\leq\cdots$.  \\
{\bf Claim:} $\{u_n\}$ is a Cauchy sequence in $W^{2,2}\cap W^{1,2}_0(\Omega)$.\\
To see this, we note that $\Delta^2(u_{n+1}-u_n)=\mu a(x)(u_{n}-u_{n-1})$. By taking $(u_{n+1}-u_n)$ as a test function and using \eqref{assum-a}, we get
\begin{eqnarray*}
|\Delta(u_{n+1}-u_n)|_{L^2(\Omega)}^2 &=& \mu\Iom a(x)(u_{n}-u_{n-1})(u_{n+1}-u_n) dx\\
&\leq& \mu \displaystyle\left(\Iom a(x)^2(u_{n}-u_{n-1})^2dx\right)^\frac{1}{2}\left(\Iom(u_{n+1}-u_{n})^2dx\right)^\frac{1}{2}\\
&\leq&\frac{\mu}{\sqrt{\ga}}|\Delta(u_{n}-u_{n-1})|_{L^2(\Omega)}|\Delta(u_{n+1}-u_n)|_{L^2(\Omega)}.
\end{eqnarray*}
Therefore $|\Delta(u_{n+1}-u_n)|_{L^2(\Omega)}\leq\frac{\mu}{\sqrt{\ga}}|\Delta(u_{n}-u_{n-1})|_{L^2(\Omega)}\leq\cdots\leq (\frac{\mu}{\sqrt{\ga}})^{n-1}|\Delta(u_2-u_1)|_{L^2(\Omega)} $.
As $\mu <\sqrt{\ga}$, from the above estimate we can conclude that $\{u_n\}$ is a Cauchy sequence in $W^{2,2}(\Omega)\cap W^{1,2}_0(\Omega)$.
Hence, there exists $u\in W^{2,2}(\Omega)\cap W^{1,2}_0(\Omega)$ such that $u_n\to u$ in $W^{2,2}(\Omega)\cap W^{1,2}_0(\Omega)$. Moreover, $u>0$ since $u_n>0 \quad\forall\quad n\geq 1$. As $u_n\in W^{2,2}(\Omega)\cap W^{1,2}_0(\Omega)$ solves \eqref{u-n}, we have
$$\Iom \Delta u_n\Delta \phi dx=\mu\Iom a(x)u_{n-1}\phi dx+\Iom b\phi dx\quad\forall\quad\phi \in W^{2,2}(\Omega)\cap W^{1,2}_0(\Omega).$$
Taking the limit as $n\to\infty$, we obtain $u$ is a solution to \eqref{eq-1}. 
\hfill$\square$
\end{proof}

\vspace{2mm}

\begin{Lemma}\label{l:supersol}
Let $a\in L^1_{loc}(\Omega)$, $b\in L^2(\Omega)$,  $f:\R^{+}\to\R^{+}$ ($f$ convex) be nonnegative functions. Let $\mu,\la>0$, $\mu<\sqrt{\ga}$.\\
Suppose there exists a nonnegative supersolution $\tilde{u}\in W^{2,2}(\Omega)\cap W^{1,2}_0(\Omega)$ of $(P_{\la})$ ( respectively for \eqref{eq-1}) . Then there exists a unique  solution $u\in W^{2,2}(\Omega)\cap W^{1,2}_0(\Omega)$ to $(P_{\la})$ which satisfies $0\leq u\leq \tilde{w}$ for any supersolution $\tilde{w}\geq 0$ of  $(P_{\la})$ (respectively for \eqref{eq-1}). $u$ is called the minimal nonnegative solution of $(P_{\la})$ (respectively for \eqref{eq-1}). By strong maximum principle it also follows that $u>0$ in $\Omega$. 
\end{Lemma}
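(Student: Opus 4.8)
The plan is to construct the minimal solution by monotone iteration, exactly paralleling the argument in Lemma~\ref{l:ex} but now with the nonlinearity $f(u)$ present. I would first set up an iteration scheme starting from $u_0\equiv 0$. Given the supersolution $\tilde u\geq 0$, define $u_{n+1}\in W^{2,2}(\Omega)\cap W^{1,2}_0(\Omega)$ as the unique solution of the linear problem
\begin{equation*}
\begin{cases}
\Delta^2 u_{n+1}-\mu a(x)u_{n+1}=f(u_n)+\la b(x) \quad\text{in}\quad\Omega,\\
u_{n+1}=0=\Delta u_{n+1} \quad\text{on}\quad\bdw.
\end{cases}
\end{equation*}
To guarantee each step is well-posed I would invoke Lemma~\ref{l:ex} with the right-hand side $f(u_n)+\la b$ in place of $b$; this requires checking $f(u_n)\in L^2(\Omega)$ at each stage, which holds because $u_n\in W^{2,2}(\Omega)\cap W^{1,2}_0(\Omega)$ lies below $\tilde u$ and $f(\tilde u)\in L^2(\Omega)$ by the supersolution hypothesis, so $0\leq f(u_n)\leq f(\tilde u)$ by monotonicity of $f$.

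The core of the argument is establishing monotonicity and a uniform upper bound via the Comparison Principle (Lemma~\ref{comp prin}). Starting from $u_0=0\leq u_1$, I would show inductively that $u_n\leq u_{n+1}$: since $f$ is nondecreasing (being convex with $f(0)=f'(0)=0$, hence increasing on $\R^+$), the monotonicity $u_{n-1}\leq u_n$ yields $f(u_{n-1})\leq f(u_n)$, and applying the comparison principle to the difference gives $u_n\leq u_{n+1}$. The same principle, comparing $u_{n+1}$ against the supersolution $\tilde u$, shows $u_{n+1}\leq\tilde u$ for all $n$. Here I must be careful that the comparison principle as stated applies to the operator $\Delta^2$; to handle the $-\mu a(x)u$ term I would rewrite the iteration so that $\mu a(x)u_n$ is absorbed into the source and appeal to the comparison argument through the equivalent second-order system under Navier boundary conditions, just as in Lemma~\ref{l:ex}.

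With $\{u_n\}$ monotone increasing and bounded above by $\tilde u$ pointwise, I would pass to the limit. The monotone convergence theorem gives a pointwise limit $u$ with $0\leq u\leq\tilde u$, and the Cauchy estimate technique of Lemma~\ref{l:ex}—testing the equation for $u_{n+1}-u_n$ against itself and using \eqref{assum-a} together with $\mu<\sqrt{\ga}$—would upgrade this to strong convergence in $W^{2,2}(\Omega)\cap W^{1,2}_0(\Omega)$, provided I can control the difference $f(u_n)-f(u_{n-1})$. The hard part will be precisely this last estimate: unlike the purely linear Lemma~\ref{l:ex}, the right-hand side now involves $f(u_n)-f(u_{n-1})$, which is not directly bounded by $u_n-u_{n-1}$ in $L^2$ without a Lipschitz bound. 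I expect to circumvent this by using dominated convergence ($f(u_n)\to f(u)$ in $L^2$ since $0\leq f(u_n)\leq f(\tilde u)\in L^2$) to identify the limit as a weak solution, and then using the contraction estimate from \eqref{assum-a} applied to the associated linear problem with fixed source $f(u)+\la b$ to obtain the norm convergence. Finally, minimality follows by comparing $u$ against an arbitrary supersolution $\tilde w\geq 0$: the same inductive comparison shows $u_n\leq\tilde w$ for every $n$, hence $u\leq\tilde w$; uniqueness of the minimal solution is immediate since any two minimal solutions dominate each other, and $u>0$ in $\Omega$ follows from the Strong Maximum Principle (Lemma~\ref{SMP}).
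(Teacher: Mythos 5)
Your proposal is correct and follows essentially the same route as the paper: a monotone iteration with the term $\mu a(x)u_{n-1}+f(u_{n-1})+\la b$ lagged onto the right-hand side (so that the comparison principle for $\Delta^2$ applies directly), squeezed between an initial iterate and the given supersolution $\tilde u$, using $f(u_n)\leq f(\tilde u)\in L^2(\Omega)$ from the monotonicity of the convex $f$. The only substantive difference is at the limit step: the paper never needs the contraction/Cauchy estimate you worry about --- it tests \eqref{eq:un-1} against $u_n$, dominates every term by the corresponding quantity for $\tilde u$ to obtain a uniform $W^{2,2}(\Omega)\cap W^{1,2}_0(\Omega)$ bound, and then passes to the limit in the weak formulation via weak compactness and Vitali's theorem, which is in substance the dominated-convergence fallback you ultimately propose.
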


\begin{Remark}\label{r:1}
We denote the minimal positive  solution of \eqref{eq-1} by $\zeta_1$ and denote $G(b)=\zeta_1$.
The function $0<u\in W^{2,2}(\Omega)\cap W^{1,2}_0(\Omega)$ solving ($P_{\la}$) (respectively \eqref{eq-1}) also solves ($P_{\la}$) (\eqref{eq-1}) in the distributional sense (see definition \eqref{d:weak sol}).  
\end{Remark}

\begin{proof}
The proof is same for both the equations $(P_{\la})$ and \eqref{eq-1}, therefore we present here the proof for $(P_{\la})$. First we will show that if minimal solution exists then it is unique. To see this, let $u_1$ and $u_2$ are two solutions which satisfy $0\leq u_i\leq\tilde{w}, ( i=1,2)$ for every nonnegative supersolution $\tilde{w}$. Thus $u_1\leq u_2$ and $u_2\leq u_1$. Hence $u_1=u_2$.

Next, let $\tilde{u}\geq 0$ be a  supersolution to ($P_{\la}$) and $u_0\in W^{2,2}(\Omega)\cap W^{1,2}_0(\Omega)$ be a positive solution to 
\begin{equation*}
\begin{cases}
\Delta^2 u_0=\la b \quad\text{in}\quad\Omega,\\
u_0=0=\Delta u_0 \quad\text{on}\quad\bdw.
\end{cases}
\end{equation*} 
By comparison principle we get $0<u_0\leq\tilde{u}$ in $\Omega$. Next, using iteration we will show that there exists $u_n\in W^{2,2}(\Omega)\cap W^{1,2}_0(\Omega)$ for $n=1,2, \cdots$ such that $u_n$ solves the following problem:
\begin{equation}\label{eq:un-1}
\begin{cases}
\Delta^2 u_n=\mu a(x)u_{n-1}+f(u_{n-1})+\la b(x) \quad\text{in}\quad\Omega,\\
u_n=0=\Delta u_n \quad\text{on}\quad\bdw.
\end{cases}
\end{equation} 
Since $\tilde{u}$ is a weak supersolution to ($P_{\la}$), we have $f(\tilde{u})\in L^2(\Omega)$. Thanks to the fact that $0<u_0\leq\tilde{u}$ and $f$ is convex (thus $f$ is nondecreasing), we get $f(u_0)\leq f(\tilde{u})$. Thus $f(u_0)+\la b(x)\in L^2(\Omega)$. Also, by \eqref{assum-a} it follows that $\mu a(x)u_{0}\in L^2(\Omega)$. Therefore $u_1$ is well defined and by comparison principle $0<u_0\leq u_1\leq \tilde{u}$. Using the induction method, similarly we can show that $u_n$ is well defined and $0<u_0\leq u_1\leq\cdots\leq u_n\leq\cdots\leq \tilde{u}$. 

\vspace{2mm}

{\bf Claim:} $\{u_n\}$ is uniformly bounded in $W^{2,2}(\Omega)\cap W^{1,2}_0(\Omega)$.\\
To see this, let us note that from \eqref{eq:un-1} we can write
\begin{eqnarray*}
|\Delta u_n|^2_{L^2(\Omega)} &=&\Iom\displaystyle\left(\mu a(x)u_{n-1}+f(u_{n-1})+\la b(x)\right)u_n dx \\
&\leq&\Iom \displaystyle\left(\mu a(x)\tilde{u}^2+f(\tilde{u})\tilde{u}+\la b\tilde{u}\right)dx\\
&\leq& \big[\mu |a(x)\tilde{u}|_{L^2(\Omega)}+|f(\tilde{u})|_{L^2(\Omega)}+\la |b|_{L^2(\Omega)}\big]|\tilde{u}|_{L^2(\Omega)}\\
&\leq& C.
\end{eqnarray*}
As a consequence there exists  $u\in W^{2,2}(\Omega)\cap W^{1,2}_0(\Omega)$ such that upto a subsequence $u_n\deb u$ in $W^{2,2}(\Omega)\cap W^{1,2}_0(\Omega)$ and $u_n\to u$ in $L^2(\Omega)$. From \eqref{eq:un-1} we have,
$$\Iom\Delta u_n\Delta\phi dx=\Iom\displaystyle\left[\mu a(x)u_{n-1}+f(u_{n-1})+\la b\right]\phi dx \quad\forall\quad\phi\in W^{2,2}(\Omega)\cap W^{1,2}_0(\Omega).$$
Using Vitaly's convergence theorem we can pass to the limit $n\to\infty$ on the RHS and obtain $u$ is a solution to $(P_{\la})$. Also $u>0$ since $u_n>0$ for all $n\geq 1$.

Let $\tilde{w}$ be another supersolution, then by comparison principle it follows that $u_0\leq \tilde{w}$ and $u_{n}\leq\tilde{w}$ for every $n\geq 1$. Taking the limit $n\to\infty$, it gives us that $u\leq\tilde{w}$. Hence the lemma follows.
\hfill$\square$
\end{proof}

\section{Existence and nonexistence results}
\begin{Theorem}\label{main ex}
Assume $a\in L^1_{loc}(\Omega)$, $0\not\equiv b\in L^2(\Omega)$, $a,b,f$ are nonnegative functions,  \eqref{assum-a}, \eqref{assum-mu}, \eqref{assum-f}, \eqref{assum-f1}, \eqref{assum-f2} and \eqref{def-g} are satisfied. Let $G=(\Delta^2-\mu a(x))^{-1}$ and $\zeta_1=G(b)$, as proved in Lemma \ref{l:ex} (also see Remark \ref{r:1}).
Suppose there exists constants $\epsilon>0$ and $C>0$ such that
\begin{equation}\label{ex-cond}
f(\epsilon\zeta_1)\in L^2(\Omega) \quad\text{and}\quad G(f(\epsilon\zeta_1))\leq C\zeta_1 \quad a.e.
\end{equation}
Then there exists $0<\la^{*}=\la^{*}(N, a(x), b(x), f, \mu)$  such that

  if $\la<\la^{*}$, then ($P_{\la}$) has a minimal positive solution $u_{\la}\in W^{2,2}(\Omega)\cap W^{1,2}_0(\Omega)$ and $u_{\la}\geq\la\zeta_1$.
  
  If $\la>\la^{*}$ then $(P_{\la})$ has no positive solution in $W^{2,2}(\Omega)\cap W^{1,2}_0(\Omega)$.
  
Moreover, if $\la>0$ is {\it small} then  $$\la\zeta_1\leq u_{\la}\leq 2\la\zeta_1.$$

\end{Theorem}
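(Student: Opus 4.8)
The plan is to study the solvability set
$\Lambda:=\{\la>0:(P_\la)\text{ admits a positive solution in }W^{2,2}(\Omega)\cap W^{1,2}_0(\Omega)\}$
and to show it is a bounded interval with left endpoint $0$, whose supremum will be the desired $\la^*$. Throughout I would use that, by Lemma \ref{l:ex}, $G=(\Delta^2-\mu a(x))^{-1}$ is a well-defined linear operator which is positivity preserving (an immediate consequence of the comparison principle, Lemma \ref{comp prin}), and that $u$ solves $(P_\la)$ precisely when $u=G(f(u))+\la\zeta_1$ with $f(u)\in L^2(\Omega)$ (using linearity of $G$ and $G(b)=\zeta_1$).

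First I would establish the interval structure. If $\la_1\in\Lambda$ with solution $u_{\la_1}$ and $0<\la_2<\la_1$, then since $b\ge 0$ one has $\Delta^2 u_{\la_1}-\mu a(x)u_{\la_1}=f(u_{\la_1})+\la_1 b\ge f(u_{\la_1})+\la_2 b$, so $u_{\la_1}$ is a nonnegative supersolution of $(P_{\la_2})$; Lemma \ref{l:supersol} then produces a minimal solution, giving $\la_2\in\Lambda$. Hence $\Lambda\supseteq(0,\sup\Lambda)$, and I would set $\la^*:=\sup\Lambda$.

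The technical core is nonemptiness of $\Lambda$ together with the two-sided bound for small $\la$, where \eqref{assum-f2}, \eqref{def-g} and \eqref{ex-cond} enter. For $0<\la<\epsilon/2$ I would set $s:=\epsilon/(2\la)>1$ and, writing $2\la\zeta_1=(\epsilon\zeta_1)/s$, use \eqref{def-g} in the form $f(t/s)\le g(s)f(t)$ to obtain the pointwise bound $f(2\la\zeta_1)\le g(\epsilon/(2\la))\,f(\epsilon\zeta_1)$; in particular $f(2\la\zeta_1)\in L^2(\Omega)$. Applying the positive operator $G$ and invoking \eqref{ex-cond} gives $G(f(2\la\zeta_1))\le C\,g(\epsilon/(2\la))\,\zeta_1$. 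Since \eqref{assum-f2} makes the nonincreasing $g$ integrable, one has $sg(s)\to 0$ as $s\to\infty$, so $C g(\epsilon/(2\la))=\tfrac{2C}{\epsilon}\la\,(sg(s))\le\la$ once $\la$ is small enough. Consequently $\tilde u:=G(\la b+f(2\la\zeta_1))=\la\zeta_1+G(f(2\la\zeta_1))$ satisfies $\la\zeta_1\le\tilde u\le 2\la\zeta_1$; because $f$ is nondecreasing and $\tilde u\le 2\la\zeta_1$, we get $f(\tilde u)\le f(2\la\zeta_1)$, whence $\Delta^2\tilde u-\mu a(x)\tilde u=\la b+f(2\la\zeta_1)\ge \la b+f(\tilde u)$, so $\tilde u$ is a genuine supersolution of $(P_\la)$. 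Lemma \ref{l:supersol} then yields a minimal solution $u_\la\le\tilde u\le 2\la\zeta_1$, while $u_\la=G(f(u_\la))+\la\zeta_1\ge\la\zeta_1$ gives the lower bound; this proves $(0,\la_0)\subseteq\Lambda$ for some $\la_0>0$, the two-sided estimate for small $\la$, and $\la^*>0$.

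Finally I would prove finiteness of $\la^*$, where superlinearity \eqref{assum-f1} is used. Let $(\Lambda_1,\phi_1)$ be the first eigenpair of $\Delta^2-\mu a(x)$ under the Navier boundary conditions; it exists with $\Lambda_1>0$ and $\phi_1>0$ because $G$ is compact (via $W^{2,2}(\Omega)\cap W^{1,2}_0(\Omega)\hookrightarrow L^2(\Omega)$), self-adjoint, and strongly positive by the strong maximum principle (Lemma \ref{SMP}), so Krein--Rutman applies. Testing $(P_\la)$ against $\phi_1$ and using symmetry of the form gives $\Lambda_1\Iom u\phi_1=\Iom f(u)\phi_1+\la\Iom b\phi_1$. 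Convexity and \eqref{assum-f1} give, for any $M>\Lambda_1$, a constant $C_M$ with $f(t)\ge Mt-C_M$ for all $t\ge 0$; inserting this and discarding the nonnegative term $(M-\Lambda_1)\Iom u\phi_1$ yields $\la\Iom b\phi_1\le C_M\Iom\phi_1$, whence $\la\le \frac{C_M\Iom\phi_1}{\Iom b\phi_1}<\infty$ (the denominator is positive since $b\not\equiv0$, $b\ge0$, $\phi_1>0$). Thus every $\la\in\Lambda$ is bounded by this constant, so $\la^*<\infty$, and by definition of the supremum $(P_\la)$ has no solution for $\la>\la^*$; for $\la<\la^*$ one picks $\la'\in(\la,\la^*)\cap\Lambda$ and repeats the monotonicity argument to obtain the minimal solution $u_\la$ with $u_\la\ge\la\zeta_1$. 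I expect the construction of the small-$\la$ supersolution---extracting $sg(s)\to0$ from \eqref{assum-f2} and combining it with \eqref{ex-cond}---to be the main obstacle, with the justification of a positive first eigenfunction in the singular-potential setting a secondary technical point.
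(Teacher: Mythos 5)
Your proposal is correct, and its core---the construction of the small-$\la$ supersolution $\la\zeta_1+G(f(2\la\zeta_1))\le 2\la\zeta_1$ from \eqref{def-g} and \eqref{ex-cond}, the extraction of $sg(s)\to 0$ from the integrability in \eqref{assum-f2} via monotonicity of $sg(s)$, the passage to a minimal solution through Lemma \ref{l:supersol}, and the definition $\la^*=\sup\Lambda$---is exactly the paper's argument (its Lemma \ref{l:ex1} together with Steps 1--4 of the proof). Two points differ. First, for the interval structure you simply observe that a solution of $(P_{\la_1})$ is itself a supersolution of $(P_{\la_2})$ when $\la_2<\la_1$ because $b\ge 0$; the paper's Proposition \ref{p:ex} instead manufactures the supersolution $w=u_{\tilde\la}-\tilde\la\zeta_1+\la\zeta_1$, which is more than is needed here, so your shortcut is legitimate and cleaner. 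Second, you prove $\la^*<\infty$ by testing against a positive first eigenfunction of $\Delta^2-\mu a(x)$ and using $f(t)\ge Mt-C_M$ from \eqref{assum-f1}. The paper does not prove finiteness inside this theorem at all: the statement only asserts $\la^*>0$, and the clause for $\la>\la^*$ is read as conditional on $\la^*$ being finite; finiteness is deferred to Lemma \ref{l:blow-up}, which bounds the larger threshold $\tilde\la^*$ by testing the distributional formulation against $\phi$ solving $\Delta^2\phi=\psi$ and absorbing $u\le C_\epsilon+\epsilon f(u)$. Your eigenfunction route is sound in outline but carries the extra burden you yourself flag: producing a positive first eigenfunction for the singular operator $\Delta^2-\mu a(x)$ requires the Krein--Rutman theory for irreducible compact positive operators (the positive cone of $L^2(\Omega)$ has empty interior, so the naive strongly-positive version does not apply), whereas the paper's test-function argument avoids spectral theory entirely. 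Neither difference is a gap; both are defensible alternatives.
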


To prove this theorem, first we need to prove a lemma and a  proposition.

\begin{Lemma}\label{l:ex1}
Let the functions $a, b$ and the constant $\mu$ satisfy the assumptions in Theorem \ref{main ex}. $\zeta_1=G(b)$ as in theorem \ref{main ex} and assume that \eqref{assum-f} is satisfied.   If 
$$f(2\zeta_1)\in L^2(\Omega)\quad\text{and}\quad G(f(2\zeta_1))\leq \zeta_1,$$ then $(P_1)$ admits a solution  $u\in W^{2,2}(\Omega)\cap W^{1,2}_0(\Omega)$.
\end{Lemma}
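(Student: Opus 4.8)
The plan is to exhibit an explicit nonnegative supersolution of $(P_1)$ in $W^{2,2}(\Omega)\cap W^{1,2}_0(\Omega)$ and then invoke Lemma \ref{l:supersol}, which produces a minimal positive solution as soon as any such supersolution is available. The only substantive work is the construction and verification of the supersolution; everything else is bookkeeping.

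The naive guess $2\zeta_1$ does not work directly: since $\Delta^2(2\zeta_1)-\mu a(x)(2\zeta_1)=2b$, the supersolution inequality for $2\zeta_1$ would require $b\geq f(2\zeta_1)$ pointwise, which is strictly stronger than the hypothesis $G(f(2\zeta_1))\leq\zeta_1$ (the latter follows from the former by monotonicity of $G$). So instead I would set $w:=G(f(2\zeta_1))$, which is well defined and lies in $W^{2,2}(\Omega)\cap W^{1,2}_0(\Omega)$ because $f(2\zeta_1)\in L^2(\Omega)$ by assumption, and which is nonnegative by the linear solvability of Lemma \ref{l:ex}. The candidate supersolution is
\[
\tilde u:=\zeta_1+w=\zeta_1+G(f(2\zeta_1)),
\]
and the hypothesis $w\leq\zeta_1$ gives $\zeta_1\leq\tilde u\leq 2\zeta_1$ a.e.

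To verify that $\tilde u$ is a supersolution I would test against an arbitrary nonnegative $\phi\in W^{2,2}(\Omega)\cap W^{1,2}_0(\Omega)$. Since $\zeta_1=G(b)$ and $w=G(f(2\zeta_1))$ solve the corresponding linear problems weakly, adding the two weak formulations yields
\[
\Iom\bigl(\Delta\tilde u\,\Delta\phi-\mu a(x)\tilde u\,\phi\bigr)\,dx=\Iom\bigl(b+f(2\zeta_1)\bigr)\phi\,dx.
\]
Because $f$ is nondecreasing (a consequence of convexity together with $f(0)=f'(0)=0$) and $\tilde u\leq 2\zeta_1$, we have $f(\tilde u)\leq f(2\zeta_1)$ pointwise, so the right-hand side dominates $\Iom\bigl(f(\tilde u)+b\bigr)\phi\,dx$; this is exactly the supersolution inequality of Definition \ref{d: sol}. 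The same bound $f(\tilde u)\leq f(2\zeta_1)\in L^2(\Omega)$ shows $f(\tilde u)\in L^2(\Omega)$, so $\tilde u$ is an admissible nonnegative supersolution.

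With this supersolution in hand, Lemma \ref{l:supersol} (applied to $(P_{\la})$ with $\la=1$) immediately produces a solution $u\in W^{2,2}(\Omega)\cap W^{1,2}_0(\Omega)$ of $(P_1)$ with $0\leq u\leq\tilde u$, and $u>0$ by the strong maximum principle, Lemma \ref{SMP}. The main (and essentially the only) obstacle is spotting the correct supersolution: the weak operator inequality $G(f(2\zeta_1))\leq\zeta_1$ cannot be used directly as a pointwise bound on the data, so one must split $\tilde u$ into the piece $\zeta_1$ generating $b$ and the piece $G(f(2\zeta_1))$ generating $f(2\zeta_1)$, and then close the argument through the monotonicity of $f$ rather than that of $G$.
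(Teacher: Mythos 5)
Your proposal is correct and follows essentially the same route as the paper: the paper also takes $v=\zeta_1+G(f(2\zeta_1))$, uses $G(f(2\zeta_1))\le\zeta_1$ to get $v\le 2\zeta_1$, deduces $f(v)\le f(2\zeta_1)$ from the monotonicity of $f$ to obtain the supersolution inequality, and concludes via Lemma \ref{l:supersol}. No gaps.
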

\begin{proof}
Let $f(2\zeta_1)\in L^2(\Omega)$ and $G(f(2\zeta_1))\leq \zeta_1$. We define, $v :=G(f(2\zeta_1))+\zeta_1$. Clearly $v>0$ and $v\in W^{2,2}(\Omega)\cap W^{1,2}_0(\Omega)$ since $\zeta_1$ and $G(f(2\zeta_1))$ are in $W^{2,2}(\Omega)\cap W^{1,2}_0(\Omega) $ by Lemma \ref{l:ex}. Also, $$v-\zeta_1=G(f(2\zeta_1)), \quad  v\leq 2\zeta_1 \quad\text{and}\quad f(v)\in L^2(\Omega).$$ Thus we have, 
$$\Delta^2(v-\zeta_1)-\mu a(x)(v-\zeta_1)=f(2\zeta_1)\quad \text{in}\quad\Omega,$$ i.e.,  
$$\Delta^2 v-\mu a(x)v=f(2\zeta_1)+b\geq f(v)+b \quad\text{in}\quad \Omega$$ and $v=0=\Delta v$  on $\bdw$. As a result, $v$ is a positive supersolution of $(P_1)$. Finally, by applying Lemma \ref{l:supersol} we get the existence of minimal positive solution  $u\in W^{2,2}(\Omega)\cap W^{1,2}_0(\Omega)$ of $(P_1)$.
\hfill$\square$
\end{proof}

\begin{Proposition}\label{p:ex}
Suppose there exists $\tilde{\la}>0$ such that $(P_{\tilde{\la}})$ has a positive solution $u_{\tilde{\la}}\in W^{2,2}(\Omega)\cap W^{1,2}_0(\Omega)$. Then for every $0<\la<\tilde{\la}$, $(P_{\la})$ has a solution in $W^{2,2}(\Omega)\cap W^{1,2}_0(\Omega)$.
\end{Proposition}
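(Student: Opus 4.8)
The plan is to show that the solution $u_{\tilde\la}$ for the larger parameter $\tilde\la$ serves directly as a positive supersolution for the problem $(P_\la)$ with the smaller parameter $\la$, and then invoke Lemma~\ref{l:supersol} to produce an actual solution. The key observation is monotonicity of the right-hand side in $\la$: since $b\geq 0$ a.e. and $0<\la<\tilde\la$, we have $\la b(x)\leq\tilde\la b(x)$. Because $u_{\tilde\la}$ solves $(P_{\tilde\la})$ exactly, it satisfies, for every nonnegative test function $\phi\in W^{2,2}(\Omega)\cap W^{1,2}_0(\Omega)$,
$$\Iom\left(\Delta u_{\tilde\la}\Delta\phi-\mu a(x)u_{\tilde\la}\phi\right)dx=\Iom\left(f(u_{\tilde\la})+\tilde\la b(x)\right)\phi\,dx\geq\Iom\left(f(u_{\tilde\la})+\la b(x)\right)\phi\,dx.$$
This is precisely the inequality in Definition~\ref{d: sol} that makes $u_{\tilde\la}$ a supersolution of $(P_\la)$.

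Next I would check that $u_{\tilde\la}$ meets the regularity hypotheses required to apply Lemma~\ref{l:supersol}. Since $u_{\tilde\la}$ is by assumption a solution of $(P_{\tilde\la})$ in the sense of Definition~\ref{d: sol}, it already lies in $W^{2,2}(\Omega)\cap W^{1,2}_0(\Omega)$, it is positive a.e., and it satisfies $f(u_{\tilde\la})\in L^2(\Omega)$. Thus $u_{\tilde\la}\geq 0$ is a genuine nonnegative supersolution of $(P_\la)$ belonging to the right function space. The functions $a,b,f$ and the constant $\mu$ satisfy the standing assumptions ($a\in L^1_{loc}(\Omega)$, $b\in L^2(\Omega)$, $f$ convex and nonnegative, $\mu<\sqrt\ga$), so all the hypotheses of Lemma~\ref{l:supersol} are in force.

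Applying Lemma~\ref{l:supersol} with the supersolution $\tilde u:=u_{\tilde\la}$ then yields a unique minimal nonnegative solution $u_\la\in W^{2,2}(\Omega)\cap W^{1,2}_0(\Omega)$ of $(P_\la)$, which by the strong maximum principle (Lemma~\ref{SMP}) is in fact strictly positive in $\Omega$. This completes the argument.

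I do not expect a serious obstacle here: the entire proof is a one-line monotonicity comparison followed by a direct citation of Lemma~\ref{l:supersol}, whose iteration machinery does all the analytic work. The only point requiring a moment's care is to confirm that a solution of $(P_{\tilde\la})$ is automatically a supersolution of $(P_\la)$ in the \emph{weak} sense of Definition~\ref{d: sol} (testing against nonnegative $\phi$), which follows immediately from $\la b\leq\tilde\la b$ together with the sign condition $b\geq 0$; the nonlinear term $f(u_{\tilde\la})$ is identical on both sides and plays no role in the comparison.
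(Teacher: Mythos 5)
Your proof is correct, and it is actually simpler than the one in the paper. You use $u_{\tilde\la}$ itself as the supersolution of $(P_\la)$: since $b\geq 0$ and $\la<\tilde\la$, the right-hand side only decreases when $\tilde\la$ is replaced by $\la$, while $f(u_{\tilde\la})\in L^2(\Omega)$ and the membership $u_{\tilde\la}\in W^{2,2}(\Omega)\cap W^{1,2}_0(\Omega)$ come for free from Definition~\ref{d: sol}; Lemma~\ref{l:supersol} then finishes the argument. The paper instead first shows that $u_{\tilde\la}/\tilde\la$ is a supersolution of \eqref{eq-1}, deduces $\zeta_1\leq u_{\tilde\la}/\tilde\la$ by minimality, and uses the \emph{smaller} function $w=u_{\tilde\la}-(\tilde\la-\la)\zeta_1$ as the supersolution, which requires the extra observation that $f(w)\leq f(u_{\tilde\la})$ via monotonicity of $f$ (coming from convexity and $f(0)=0$). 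The payoff of the paper's construction is the sharper a priori bound $u_\la\leq u_{\tilde\la}-(\tilde\la-\la)\zeta_1$, i.e.\ a quantitative strict separation of the minimal solutions as $\la$ varies, in the spirit of the convexity arguments of Dupaigne--Nedev; but since the minimal solution of Lemma~\ref{l:supersol} sits below \emph{every} nonnegative supersolution, that bound can be recovered afterwards anyway, and for the bare existence claim of the proposition your one-line monotonicity comparison is entirely sufficient.
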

\begin{proof}
Let $u_{\tilde{\la}}\in W^{2,2}(\Omega)\cap W^{1,2}_0(\Omega)$ denote a positive solution corresponding to $(P_{\tilde{\la}})$. Therefore by definition (see Definition \ref{d: sol}) $f(u_{\tilde{\la}})\in L^2(\Omega)$. Define, $v=\tilde{\la}\zeta_1$. Note that,
$$\Delta^2\displaystyle\left(\frac{u_{\tilde{\la}}}{\tilde{\la}}\right)-\mu a(x)\left(\frac{u_{\tilde{\la}}}{\tilde{\la}}\right)= \frac{1}{\tilde{\la}}\displaystyle\left(f(u_{\tilde{\la}})+\tilde{\la}b \right)=\frac{f(u_{\tilde{\la}})}{\tilde{\la}}+b\geq b \quad\text{in}\quad\Omega.$$
This implies, $\frac{u_{\tilde{\la}}}{\tilde{\la}}$ is a positive supersolution to \eqref{eq-1}. Therefore by minimality of $\zeta_1$ it follows, $\zeta_1\leq \frac{u_{\tilde{\la}}}{\tilde{\la}}$, which in turn implies  $v\leq u_{\tilde{\la}}$. 
Let $0<\la<\tilde{\la}$ and define, $w=u_{\tilde{\la}}-v+\la\zeta_1$. Clearly $w>0$. Using the definition of $v$ and $\la$ we also get $w\leq u_{\tilde{\la}}$. By convexity of $f$, it follows $\frac{f(t)}{t}$ is increasing and thus $f$ is nondecreasing. As a consequence, $f(w)\leq f(u_{\tilde{\la}})$ and hence $f(w)\in L^2(\Omega)$. Also,
$$\Delta^2 w-\mu a(x)w=f(u_{\tilde{\la}})+\tilde{\la}b-(\tilde{\la}-\la)b=f(u_{\tilde{\la}})+\la b\geq f(w)+\la b.$$
As a result, $w\in W^{2,2}(\Omega)\cap W^{1,2}_0(\Omega)$ is a positive supersolution to $(P_{\la})$. Hence by Lemma \ref{l:supersol}, there exists minimal positive solution of $(P_{\la})$.
\hfill$\square$
\end{proof}

\vspace{2mm}

{\bf Proof of Theorem \ref{main ex}:} We assume \eqref{ex-cond} holds true. \\
{\bf Step 1}: In this step we will show that if $\la>0$ is small then $(P_{\la})$ has a positive a solution $u_{\la}\in W^{2,2}(\Omega)\cap W^{1,2}_0(\Omega)$. We will prove this step in the spirit of \cite{DN}.
By Lemma \ref{l:ex1}, it follows that $(P_{\la})$ has a solution as soon as it holds, 
\begin{equation}\label{ex-cond-1}
f(2\la\zeta_1)\in L^2(\Omega)  \quad\text{and}\quad G(f(2\la\zeta_1))\leq\la\zeta_1.
\end{equation}
From the definition of $g$ (see definition \eqref{def-g}), it follows that $g(\frac{\epsilon}{2\la})\geq \frac{f(t)}{f(t\frac{\epsilon}{2\la})}$ for all $t>0$. Choosing $t=2\la\zeta_1$, we get $f(2\la\zeta_1)\leq f(\epsilon\zeta_1)g(\frac{\epsilon}{2\la})$. Applying \eqref{ex-cond}, we have $f(2\la\zeta_1)\in L^2(\Omega)$ and $G(f(2\la\zeta_1))$ is well defined. Also by minimality of $G(f(2\la\zeta_1))$ and by assumption \eqref{ex-cond}, we get
$$G(f(2\la\zeta_1))\leq g\displaystyle\left(\frac{\epsilon}{2\la}\right)G(f(\epsilon\zeta_1))\leq Cg\left(\frac{\epsilon}{2\la}\right)\zeta_1.$$
To show \eqref{ex-cond-1} holds for $\la>0$ small, it is enough to prove that 
$$\lim_{\la\to 0}\frac{1}{\la}g\displaystyle\left(\frac{\epsilon}{2\la}\right)=0 \quad\text{or equivalently}\quad \lim_{K\to\infty}Kg(K)=0.$$
Since $s\to sg(s)$ is nonincreasing, the above limit is well defined, i.e. there exists $C'\geq0$ such that $\lim_{K\to\infty}Kg(K)=C'$. If $C'>0$, then $g(K)\sim\frac{C}{K}$ near $\infty$ and this contradicts \eqref{assum-f2}. Hence $C'=0$ and \eqref{ex-cond-1} holds for $\la>0$ small.

{\bf Step 2}: Define, $$\Lambda=\{\la>0:  (P_{\la}) \ \text{has a minimal positive solution}\ u_{\la} \}, $$ 
By Step 1 and Proposition \ref{p:ex}, it follows that $\Lambda$ is a non-empty interval. We define,
$$\la^{*}=\sup \Lambda .$$
Then it is easy to see that, if $\la<\la^{*}$, $(P_{\la})$ has a minimal positive solution and for $\la>\la^{*}$, $(P_{\la})$ does not have any positive solution in $W^{2,2}(\Omega)\cap W^{1,2}_0(\Omega)$.

{\bf Step 3}: From $G(b)=\zeta_1$, it is easy to see that $G(\la b)=\la\zeta_1$. If $\la<\la^{*}$ and $u_{\la}$ denotes the corresponding minimal positive solution of $(P_{\la})$, then it is not difficult to check that $u_{\la}$ is a supersolution to the equation satisfied by $\la\zeta_1$.  Therefore by minimality of 
$\la\zeta_1$, we get 
\begin{equation}\label{u-low}
u_{\la}\geq\la\zeta_1.
\end{equation}

{\bf Step 4}: In this step we will show that if $\la>0$ is small, then $$\la\zeta_1\leq u_{\la}\leq 2\la\zeta_1.$$
By Step 1, \eqref{ex-cond-1} holds since $\la>0$ is small. Define, $w=G(f(2\la\zeta_1))+\la\zeta_1$. Therefore 
$$w\leq 2\la\zeta_1 \quad\text{and}\quad w-\la\zeta_1=G(f(2\la\zeta_1)).$$
As in the proof of Lemma \ref{l:ex1}, we can establish that $w\in W^{2,2}(\Omega)\cap W^{1,2}_0(\Omega)$ is a positive supersolution of $(P_{\la})$. Thus $u_{\la}\leq w\leq 2\la\zeta_1$. Combining this with \eqref{u-low}, we have  $\la\zeta_1\leq u_{\la}\leq 2\la\zeta_1.$

\hfill$\square$

\vspace{2mm}

Define 
\begin{equation}\label{u-star}
u^*(x)=\lim_{\la\uparrow\la^*}u_{\la}(x), \quad x\in\Omega.
\end{equation}

\begin{Theorem}
Assume all the assumptions in Theorem \ref{main ex} are satisfied and $u_{\la}$ denotes the minimal positive solution of $(P_{\la})$ for $0<\la<\la^*$. In addition we suppose $f$ satisfies the following condition:
\begin{equation}\label{assum-f3}
\lim_{s\to\infty}\frac{sf'(s)}{f(s)}>1.
\end{equation}
Then $u^{*}\in W^{2,2}(\Omega)\cap W^{1,2}_0(\Omega)$ and $u^*$ is a solution to $(P_{\la^*})$. Moreover, $u_{\la}\to u^*$ in  $W^{2,2}(\Omega)\cap W^{1,2}_0(\Omega)$.
\end{Theorem}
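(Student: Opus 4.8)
The plan is to realize $u^*$ as the increasing limit of the family $\{u_\la\}_{\la<\la^*}$ and to control it in the energy norm by combining the \emph{semi-stability} of minimal solutions with the superlinearity encoded in \eqref{assum-f3}. First I would record monotonicity: for $\la<\la'<\la^*$ the solution $u_{\la'}$ is a supersolution of $(P_\la)$ (since $f(u_{\la'})+\la b\le f(u_{\la'})+\la' b$), so minimality of $u_\la$ (Lemma \ref{l:supersol}) together with the comparison principle (Lemma \ref{comp prin}) gives $u_\la\le u_{\la'}$; hence the pointwise limit \eqref{u-star} is well defined in $[0,\infty]$ and $u^*\ge\la\zeta_1>0$. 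Next I would establish that each minimal solution is semi-stable, i.e.
$$\Iom\big(|\Delta\phi|^2-\mu a(x)\phi^2-f'(u_\la)\phi^2\big)\,dx\ge 0\qquad\forall\,\phi\in W^{2,2}(\Omega)\cap W^{1,2}_0(\Omega).$$
This is a minimal-solution property proved by the standard contradiction: if the first eigenvalue of the linearized operator $\Delta^2-\mu a(x)-f'(u_\la)$ (Navier conditions) were negative with positive eigenfunction $\phi_1$, then convexity of $f$ ($f''\ge0$) would make $u_\la-\var\phi_1$ a strictly smaller supersolution of $(P_\la)$ for small $\var$, contradicting minimality. The biharmonic maximum and comparison principles of Lemmas \ref{SMP}--\ref{comp prin}, available thanks to the Navier boundary condition, are exactly what drive this step.

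With semi-stability in hand I would derive a uniform energy bound. By \eqref{assum-f3} there are $\theta>1$ and $s_0>0$ with $sf'(s)\ge\theta f(s)$ for $s\ge s_0$. Testing the stability inequality with $\phi=u_\la$ and the equation itself with $u_\la$ gives
$$\Iom f'(u_\la)u_\la^2\,dx\le \|u_\la\|_H^2=\Iom f(u_\la)u_\la\,dx+\la\Iom b\,u_\la\,dx.$$
Splitting $\int f'(u_\la)u_\la^2$ over $\{u_\la\ge s_0\}$ and $\{u_\la<s_0\}$ yields $(\theta-1)\int f(u_\la)u_\la\le \la\int b\,u_\la+C_0$, with $C_0$ depending only on $f,s_0,|\Omega|$. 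Writing $E=\|u_\la\|_H^2=\int f(u_\la)u_\la+\la\int b\,u_\la$ and estimating $\la\int b\,u_\la\le\la^*|b|_{L^2}|u_\la|_{L^2}\le C_1\sqrt E$ (via \eqref{tilde-ga} and the norm equivalence), the previous line becomes the quadratic inequality $E\le\frac{\theta}{\theta-1}C_1\sqrt E+C_2$ in $\sqrt E$, which bounds $E$ independently of $\la<\la^*$. Thus $\|u_\la\|_{W^{2,2}(\Omega)\cap W^{1,2}_0(\Omega)}\le C$ and $\int_\Omega f(u_\la)u_\la\,dx\le C$ uniformly.

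The uniform bound yields $u_\la\deb u^*$ in $W^{2,2}(\Omega)\cap W^{1,2}_0(\Omega)$ (the whole family, by monotonicity and uniqueness of the a.e.\ limit) and $u_\la\to u^*$ in $L^2(\Omega)$ and a.e.; in particular $u^*\in W^{2,2}(\Omega)\cap W^{1,2}_0(\Omega)$. Rather than rely on lower semicontinuity, I would obtain strong convergence directly: for $\la<\la'$, setting $w=u_{\la'}-u_\la\ge0$ and testing the difference equation against $w$ gives
$$\|w\|_H^2=\Iom\big(f(u_{\la'})-f(u_\la)\big)\big(u_{\la'}-u_\la\big)\,dx+(\la'-\la)\Iom b\,w\,dx.$$
Since $f(u^*)u^*\in L^1(\Omega)$ (monotone convergence from the uniform bound on $\int f(u_\la)u_\la$) dominates each of the four cross terms $f(u_{\la'})u_{\la'},\,f(u_{\la'})u_\la,\,f(u_\la)u_{\la'},\,f(u_\la)u_\la$, dominated convergence sends the first integral to $0$ as $\la,\la'\uparrow\la^*$, while $(\la'-\la)\int b\,w\to0$ because $\int b\,w$ stays bounded. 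Hence $\{u_\la\}$ is Cauchy and $u_\la\to u^*$ strongly in $W^{2,2}(\Omega)\cap W^{1,2}_0(\Omega)$.

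Finally I would pass to the limit in the weak formulation to identify $u^*$ as a solution of $(P_{\la^*})$. Strong $W^{2,2}$ convergence handles $\int\Delta u_\la\Delta\phi$, and since $a u_\la$ is bounded in $L^2$ by \eqref{assum-a} and converges a.e., it converges weakly in $L^2$, so $\mu\int a u_\la\phi\to\mu\int a u^*\phi$. The delicate point, which I expect to be the main obstacle, is the nonlinear term: I must pass to the limit in $\int f(u_\la)\phi$ and conclude $f(u^*)\in L^2(\Omega)$ as Definition \ref{d: sol} demands. The estimates above only give the $L^1$-type control $\int f(u_\la)u_\la\le C$ and, through $f(u_\la)=\Delta^2u_\la-\mu a u_\la-\la b$, bounds of $f(u_\la)$ in a dual space rather than in $L^2$; upgrading this to a uniform $L^2$ bound on $f(u_\la)$ (so that, with a.e.\ convergence, the weak $L^2$ limit is $f(u^*)$ and $f(u^*)\in L^2$) is where the full strength of the growth hypotheses \eqref{assum-f2}--\eqref{assum-f3} and the elliptic regularity for $\Delta^2-\mu a(x)$ must be used, presumably via a bootstrap started from the $L^1$ control together with Fatou's lemma. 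Once $u^*$ is known to solve $(P_{\la^*})$, testing against $u^*$ and monotone convergence re-confirm $\|u_\la\|_H\to\|u^*\|_H$, consistent with the strong convergence established above.
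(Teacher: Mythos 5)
Your core strategy coincides with the paper's: test the stability inequality and the equation itself with $u_{\la}$, use \eqref{assum-f3} in the form $(1+\epsilon)sf(s)\le s^2f'(s)+C$ to absorb $\int_{\Omega}f(u_{\la})u_{\la}$, deduce a uniform bound on $\|u_{\la}\|_{W^{2,2}(\Omega)\cap W^{1,2}_0(\Omega)}$, and extract $u^*$ as the (weak and monotone) limit. Two deviations are worth recording. First, the paper does not reprove semi-stability here; it invokes Theorem \ref{t:stability}, which is established by the Dupaigne--Nedev implicit-function-theorem/maximal-curve argument, not by the eigenfunction perturbation you sketch. Your sketch does not work as written for a merely $C^1$ convex $f$: convexity gives $f(u_{\la}-\var\phi_1)\ge f(u_{\la})-\var f'(u_{\la})\phi_1$, so the computation for $w=u_{\la}-\var\phi_1$ yields $\Delta^2 w-\mu a w-f(w)-\la b=-\var\la_1\phi_1-\bigl[f(u_{\la}-\var\phi_1)-f(u_{\la})+\var f'(u_{\la})\phi_1\bigr]$, and dominating the bracketed second-order remainder by $-\var\la_1\phi_1$ requires control of $f''$ (and positivity of $w$), neither of which is available. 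Since stability is supplied by the paper as a separate theorem, you should cite it rather than rederive it. Second, your route to strong convergence (a Cauchy argument in the energy norm, dominated by $f(u^*)u^*\in L^1(\Omega)$ obtained from the uniform bound on $\int_{\Omega}f(u_{\la})u_{\la}$) is a legitimate alternative to the paper's, which instead combines weak convergence with convergence of norms, $\|u_{\la}\|^2=\mu\int_{\Omega}a u_{\la}^2+\int_{\Omega}f(u_{\la})u_{\la}+\la\int_{\Omega}bu_{\la}\to\|u^*\|^2$, via monotone convergence.

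The one genuine gap is the one you flag yourself: passing to the limit in the nonlinear term and verifying $f(u^*)\in L^2(\Omega)$, as Definition \ref{d: sol} demands. For nonnegative test functions monotone convergence does give $\int_{\Omega}f(u_{\la})\phi\to\int_{\Omega}f(u^*)\phi$, and this is all the paper offers ("via Lebesgue monotone convergence"); the $L^2$-integrability of $f(u^*)$, needed both to extend the identity to sign-changing $\phi\in W^{2,2}(\Omega)\cap W^{1,2}_0(\Omega)$ and to meet the definition of solution, is not addressed there either. The uniform bounds you derive control only $\int_{\Omega}f(u_{\la})u_{\la}$, which after the monotone passage yields $f(u^*)u^*\in L^1(\Omega)$, not $f(u^*)\in L^2(\Omega)$; your proposed bootstrap is not carried out and is not obviously available from \eqref{assum-f2}--\eqref{assum-f3} alone. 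So your proposal is incomplete at exactly the point where the paper's own argument is thinnest; a careful write-up would have to supply this integrability separately or weaken the notion of solution at $\la^*$.
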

\begin{proof}
$u_{\la}$ is a solution of $(P_{\la})$ implies 
\begin{equation}\label{eq:u-la}
\Iom\Delta u_{\la}\Delta v=\mu\Iom a(x)u_{\la}v+\Iom f(u_{\la})v+\la\Iom b(x)v \quad \forall\quad v\in W^{2,2}(\Omega)\cap W^{1,2}_0(\Omega).
\end{equation}
By Theorem \ref{t:stability}, it follows that $u_{\la}$ is a stable solution of $(P_{\la})$ (see Definition \ref{d:stable}). Therefore $\displaystyle\Iom\left(|\Delta u_{\la}|^2-\mu a(x)u_{\la}^2-f'(u_{\la})u_{\la}^2\right)dx\geq 0$. Hence by taking $v=u_{\la}$ in \eqref{eq:u-la} we have,
\begin{equation}\label{eq:u-la1}
\displaystyle\Iom f'(u_{\la})u_{\la}^2 dx\leq\Iom\left(|\Delta u_{\la}|^2-\mu a(x)u_{\la}^2\right)dx=\Iom\left(f(u_{\la})u_{\la}+\la b(x)u_{\la}\right)dx.
\end{equation}
Moreover, using \eqref{assum-f3} we can write, for every $\epsilon>0$ there exists $C>0$ such that 
\begin{equation}\label{eq:f}
(1+\epsilon)f(s)s\leq f'(s)s^2+C \quad\forall\quad s\geq 0.
\end{equation}
Hence combining \eqref{eq:u-la1} and \eqref{eq:f} we obtain,
$$(1+\epsilon)\Iom \displaystyle\left(f'(u_{\la})u_{\la}^2-\la b(x)u_{\la}\right)dx\leq (1+\epsilon)\Iom f(u_{\la})u_{\la} dx\leq\Iom \left(f'(u_{\la})u_{\la}^2+C\right)dx.$$
As a result, $$\epsilon\Iom f'(u_{\la})u_{\la}^2 dx\leq C|\Omega|+(1+\epsilon)\la\Iom b u_{\la}dx.$$
Consequently, 
\begin{equation}\label{eq:u-la2}
\Iom f(u_{\la})u_{\la} dx\leq C_1+C_2\la\Iom b u_{\la}dx,
\end{equation}  for some constants $C_1, C_2>0$.
Since $\la<\la^*$,  by taking $v=u_{\la}$ in \eqref{eq:u-la} and applying Holder inequality and \eqref{eq:u-la2} we have,
\begin{eqnarray*}
\Iom |\Delta u_{\la}|^2 dx &=&\mu\Iom a(x)u^2_{\la}+\Iom f(u_{\la})u_{\la}+\la\Iom bu_{\la}\\
&\leq& \mu |a(x)u_{\la}|_{L^2(\Omega)}|u_{\la}|_{L^2(\Omega)}+\la^{*}(1+C_2)\Iom bu_{\la}dx+C_1.
\end{eqnarray*}
Applying \eqref{assum-a} and Cauchy-Schwartz inequality with $\delta>0$ on the above estimate, we get
\begin{eqnarray*}
\Iom |\Delta u_{\la}|^2 dx &\leq& \frac{\mu}{\sqrt{\ga}}|\Delta u_{\la}|^2_{L^2(\Omega)}+C_3|b|_{L^2(\Omega)}|u_{\la}|_{L^2(\Omega)}+C_1\\
&\leq& \frac{\mu}{\sqrt{\ga}}|\Delta u_{\la}|^2_{L^2(\Omega)}+\frac{C_3}{\sqrt{\ga}}|b|_{L^2(\Omega)}|\Delta u_{\la}|_{L^2(\Omega)}+C_1\\
&\leq& \frac{\mu}{\sqrt{\ga}}|\Delta u_{\la}|^2_{L^2(\Omega)}+\delta |\Delta u_{\la}|^2_{L^2(\Omega)}+c(\delta)|b|^2_{L^2(\Omega)}+C_1.
\end{eqnarray*}
Since $\mu<\sqrt{\ga}$ (by  \eqref{assum-mu}), we can choose $\delta>0$ such that 
$\frac{\mu}{\sqrt{\ga}}+\delta<1$. Hence from the above estimate we have
$$\Iom |\Delta u_{\la}|^2 dx\leq C_4|b|^2_{L^2(\Omega)}+C_1\leq C',$$ for some constant $C'>0$. This implies $\{u_{\la}\}$ is uniformly bounded in $W^{2,2}(\Omega)\cap W^{1,2}_0(\Omega)$ for $\la<\la^*$. Consequently, by \eqref{u-star} we conclude that $u_{\la}\deb u^*$ in $W^{2,2}(\Omega)\cap W^{1,2}_0(\Omega)$. Passing the limit $\la\to\la^*$ in \eqref{eq:u-la} via. Lebesgue monotone convergence theorem it is easy to check that $u^*$ is a solution to $(P_{\la^*})$.
When $\la\to\la^*$, using monotone convergence theorem we also have,
\begin{eqnarray*}
||u_{\la}||^2_{W^{2,2}(\Omega)\cap W^{1,2}_0(\Omega)}=\Iom|\Delta u_{\la}|^2dx &=&\mu\Iom a(x)u_{\la}^2+\Iom f(u_{\la})u_{\la}+\la\Iom bu_{\la}\\
&\to&\mu\Iom a(x){u^*}^2+\Iom f(u^*)u^*+\la^*\Iom bu^*\\
&=& \Iom|\Delta u^*|^2dx=||u^*||^2_{W^{2,2}(\Omega)\cap W^{1,2}_0(\Omega)}
\end{eqnarray*}
Hence $||u_{\la}||_{W^{2,2}(\Omega)\cap W^{1,2}_0(\Omega)}\to ||u^*||_{W^{2,2}(\Omega)\cap W^{1,2}_0(\Omega)}$. Combining this along with the weak convergence, we conclude $u_{\la}\to u^*$ in $W^{2,2}(\Omega)\cap W^{1,2}_0(\Omega)$.
\hfill$\square$
\end{proof}

\vspace{2mm}

{\bf Remark:} We denote by $u_{\la^{*}}$, the minimal positive solution of $(P_{\la^*})$.

\section{Nonexistence of very weak solution and Complete blow-up }
Define $$\tilde{\lambda}^{*}=\sup\{\la>0:  (P_{\la}) \ \text{has a very weak solution/distributional solution }\}.$$
It is not difficult to check that if $u\in W^{2,2}(\Omega)\cap W^{1,2}_0(\Omega)$ is a solution to $(P_{\la})$ in the sense of Definition \ref{d: sol}, then $u$ is a very weak solution of $(P_{\la})$ as well. Therefore $\tilde{\lambda}^{*}\geq\la^{*}$.
\begin{Lemma}\label{l:blow-up}
$\tilde{\lambda}^{*}<\infty$.
\end{Lemma}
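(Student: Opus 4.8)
The plan is to test the very weak formulation against the first eigenfunction of $\Delta^2-\mu a(x)$ and play it off against the superlinearity \eqref{assum-f1}. By \eqref{tilde-ga} the Rayleigh quotient $\|u\|_H^2/\Iom u^2\,dx$ is bounded below by $\tilde\ga>0$ on $H\setminus\{0\}$, and since $H$ is equivalent to $W^{2,2}_0(\Omega)$ it embeds compactly into $L^2(\Omega)$; hence the infimum is attained and the first eigenvalue $\la_1>0$ of $\Delta^2-\mu a(x)$ (with Navier boundary conditions) has an associated eigenfunction $\phi_1$, which by Lemma \ref{SMP} may be taken with $\phi_1>0$ in $\Omega$. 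Thus $\Delta^2\phi_1-\mu a(x)\phi_1=\la_1\phi_1$ in $\Omega$ and $\phi_1=\Delta\phi_1=0$ on $\bdw$.

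First I would record the quantitative form of superlinearity: \eqref{assum-f1} gives $f(t)/t\to\infty$, so for the choice $M:=\la_1+1$ there is a constant $C_M>0$ with $f(t)\ge Mt-C_M$ for all $t\ge 0$. Next, assuming $(P_\la)$ has a very weak solution $u$, I would use $\phi_1$ as a test function in Definition \ref{d:weak sol}, which (modulo the justification below) gives
\begin{equation*}
\la_1\Iom u\phi_1\,dx=\Iom u\big(\Delta^2\phi_1-\mu a(x)\phi_1\big)\,dx=\Iom f(u)\phi_1\,dx+\la\Iom b\phi_1\,dx.
\end{equation*}
Inserting $f(u)\ge Mu-C_M$ and recalling $M=\la_1+1$ yields
\begin{equation*}
\la\Iom b\phi_1\,dx\le(\la_1-M)\Iom u\phi_1\,dx+C_M\Iom\phi_1\,dx\le C_M\Iom\phi_1\,dx,
\end{equation*}
where the first term on the right was dropped because $\la_1-M=-1<0$ and $u,\phi_1>0$. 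Since $b\ge0$, $b\not\equiv0$ and $\phi_1>0$, we have $\Iom b\phi_1\,dx>0$, and therefore
\begin{equation*}
\la\le\frac{C_M\Iom\phi_1\,dx}{\Iom b\phi_1\,dx}<\infty.
\end{equation*}
As this bound is independent of the particular very weak solution $u$, it forces $\tilde\la^*\le C_M\Iom\phi_1\,dx/\Iom b\phi_1\,dx<\infty$.

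The hard part will be justifying that $\phi_1$ is an admissible test function, since Definition \ref{d:weak sol} literally only permits $\phi\in C^\infty_0(\Omega)$ and, because $a$ may be singular (e.g.\ $a(x)=\bar\al|x|^{-2}$ as in \eqref{Rellich}), $\phi_1$ need not be bounded. To make the displayed identity rigorous I would (i) show that for a very weak solution the products $u\phi_1$, $f(u)\phi_1$ and $\mu a(x)u\phi_1$ all lie in $L^1(\Omega)$, using the lower bounds on $u$ furnished by the weak Harnack principle (Lemma \ref{WHP}) together with $f(u)\in L^1_{loc}$, and (ii) approximate $\phi_1$ by a sequence $\phi_n\in C^\infty_0(\Omega)$ with $\phi_n\to\phi_1$ in $H$ (so that $\Delta^2\phi_n-\mu a(x)\phi_n\to\la_1\phi_1$ in the appropriate pairing), passing to the limit in the distributional identity by monotone/dominated convergence. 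Once the integrability and the limiting argument are in place, the remaining computation is the elementary rearrangement above.
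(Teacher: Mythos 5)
Your overall strategy --- test against a positive principal eigenfunction of $\Delta^2-\mu a(x)$ and use $f(t)\ge Mt-C_M$ with $M>\la_1$ --- is a classical route and is genuinely different from the paper's. The paper instead fixes a nonnegative $\psi\in C^{\infty}_0(\Omega)$, lets $\phi$ solve $\Delta^2\phi=\psi$ with Navier conditions (so $\phi$ is smooth, bounded below on $\mathrm{supp}\,\psi$ by the strong maximum principle, and $\Delta^2\phi$ is bounded with compact support), obtains the identity $\mu\Iom a(x)u\phi\,dx+\Iom f(u)\phi\,dx+\la\Iom b\phi\,dx=\Iom u\psi\,dx$, and uses the dual form of superlinearity, $u\le C_{\epsilon}+\epsilon f(u)$, to absorb the term $\Iom f(u)\psi\,dx$ into the left-hand side. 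That choice of test function is made precisely to avoid the two places where your argument has real gaps.

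First, the positivity of $\phi_1$. Lemma \ref{SMP} applies to supersolutions of $\Delta^2u=0$; to invoke it for $\phi_1$ you would need to know $\Delta^2\phi_1=(\mu a(x)+\la_1)\phi_1\ge0$, i.e.\ $\phi_1\ge0$, which is what you are trying to prove --- the argument is circular, and the usual trick of replacing $\phi_1$ by $|\phi_1|$ in the Rayleigh quotient is unavailable for $\Iom|\Delta u|^2dx$. Positivity can be rescued by Krein--Rutman applied to the compact, positivity-improving operator $G=(\Delta^2-\mu a)^{-1}$ of Lemma \ref{l:ex}, but note also that your $H$ is the completion of $C^{\infty}_0(\Omega)$, i.e.\ the Dirichlet form domain $W^{2,2}_0(\Omega)$ rather than the Navier one $W^{2,2}(\Omega)\cap W^{1,2}_0(\Omega)$, and for the Dirichlet bilaplacian positivity of the first eigenfunction fails on general domains. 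Second, and more seriously, the admissibility of $\phi_1$: you correctly flag this as the hard part, but the plan you sketch does not meet the actual difficulty. The obstruction is not a lower bound on $u$ (Lemma \ref{WHP} gives lower bounds, which cannot yield integrability from above); it is that $\Delta^2\phi_1=\mu a(x)\phi_1+\la_1\phi_1$ inherits the singularity of $a$, so the pairing $\Iom u\,\Delta^2\phi_1\,dx$ is not defined a priori for $u\in L^1(\Omega)$, and a very weak solution only guarantees $\mu a(x)u+f(u)\in L^1_{loc}(\Omega)$, so $\Iom a(x)u\phi_1\,dx$ and $\Iom f(u)\phi_1\,dx$ may diverge. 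In the paper's version $\Delta^2\phi=\psi$ is bounded with compact support, so $\Iom u\Delta^2\phi\,dx$ is finite outright and the identity forces the remaining terms to be finite. As written, the deferred limiting argument is essentially the whole proof, and it is not supplied.
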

\begin{proof}
Assume $(P_{\la})$ has a very weak solution $u\in L^1(\Omega)$. Therefore 
\begin{equation}\label{eq:dist}
\Iom \displaystyle u\left(\Delta^2 \phi-\mu a(x)\phi\right) dx=\Iom\left(f(u)+\la b(x)\right)\phi\ dx \quad\forall\quad \phi\in C^{\infty}_{0}(\Omega).
\end{equation}
Let $\tilde{\Omega}\Subset\Omega$ and $\psi\in C^{\infty}_0(\Omega)$ be a  nonnegative function such that supp$(\psi)\subset\tilde{\Omega}$. We choose $\phi$ as follows:
\begin{equation*}
\begin{cases}
\Delta^2 \phi=\psi \quad\text{in}\quad\Omega,\\
\phi=0=\Delta\phi \quad\text{on}\quad\bdw.
\end{cases}
\end{equation*}
Clearly $\phi\in C^{\infty}(\Omega)$ and by strong maximum principle $\phi> 0$ in $\Omega$. Thus there exists $c>0$ such that $\phi\geq c>0$ in $\tilde{\Omega}$. Substituting this $\phi$ in \eqref{eq:dist}, we have
\begin{equation}\label{eq:phi-psi}
\mu\Iom a(x)u\phi\ dx+\Iom f(u)\phi\ dx+\la\Iom b(x)\phi\ dx=\Iom u\psi\ dx=\int_{\tilde{\Omega}}u\psi\ dx.
\end{equation}
Since $f$ satisfies \eqref{assum-f1}, it is easy to check that, for $\epsilon>0$ there exists a constant $C_{\epsilon}>0$ such that
$$u\leq C_{\epsilon}+\epsilon f(u).$$
Therefore from RHS of \eqref{eq:phi-psi} we get,
\begin{equation*}
\int_{\tilde{\Omega}}u\psi\ dx\leq C_{\epsilon}\Iom\psi dx+\epsilon\int_{\tilde{\Omega}}f(u)\psi dx\leq  C_{\epsilon}\Iom\psi dx+\epsilon|\frac{\psi}{\phi}|_{L^{\infty}(\tilde{\Omega)}}\int_{\Omega}f(u)\phi dx
\end{equation*}
Now choose $\epsilon>0$ such that $\epsilon|\frac{\psi}{\phi}|_{L^{\infty}(\tilde{\Omega)}}<\frac{1}{2}$. Thus from \eqref{eq:phi-psi} we have,
$$\mu\Iom a(x)u\phi\ dx+\frac{1}{2}\Iom f(u)\phi\ dx+\la\Iom b(x)\phi\ dx\leq C\Iom\psi dx\leq C'.$$
This implies $\tilde{\la}^{*}<\infty$. In particular there are no solution of $(P_{\la})$ for $\la>\tilde{\la}^{*}$, even in the very weak sense.
\hfill$\square$
\end{proof}
\begin{Definition}\label{d:blow-up}
Let $\{a_n(x)\}$, $\{b_n(x)\}$ and $\{f_n\}$ be increasing sequence of bounded functions converging pointwise respectively to $a(x)$, $b(x)$ and $f$. (Since $f\in C^1(\R^+)$, without loss of generality we can also assume $f_n\in C(\R^+)$). Let $u_n\in W^{2,2}(\Omega)\cap W^{1,2}_0(\Omega)$ be the minimal nonnegative solution of 
\begin{equation}\label{a-n}
\begin{cases}
\Delta^2 u_n-\mu a_n(x)u_n=f_n(u_n)+\la b_n(x) \quad\text{in}\quad\Omega,\\
u_n=0=\Delta u_n \quad\text{on}\quad\bdw.
\end{cases}
\end{equation}
We say that there is a complete blow-up in $(P_{\la})$, if given any such $\{a_n(x)\}$, $\{b_n(x)\}$, $\{f_n\}$ and $u_n$,
$$u_n(x)\to\infty \quad\forall\ x\in\Omega.$$
\end{Definition}

{\bf Remark}: Existence of $u_n$ follows from Lemma A.3.

\vspace{2mm}

We prove the next theorem in the spirit of \cite{PP}.
\begin{Theorem}\label{t:blow-up}
Fix $\la>0$. Suppose $(P_{\la})$ does not have any solution, even in the very weak sense. Then there is complete blow up.
\end{Theorem}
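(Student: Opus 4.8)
The plan is to argue by contraposition: assuming that complete blow-up fails, I will produce a very weak solution of $(P_{\la})$, contradicting the hypothesis. Recall that by Definition \ref{d:blow-up} complete blow-up fails precisely when there is some admissible choice of increasing, pointwise-convergent $\{a_n\}$, $\{b_n\}$, $\{f_n\}$ whose minimal solutions $u_n$ of \eqref{a-n} do \emph{not} satisfy $u_n(x)\to\infty$ at every $x$; I fix such a sequence. Since $\{a_n\}$, $\{b_n\}$, $\{f_n\}$ are increasing and each $f_n$ is nondecreasing, one checks that $u_{n+1}$ is a supersolution of the problem solved by $u_n$, so by the minimality in Lemma \ref{l:supersol} the sequence $\{u_n\}$ is itself nondecreasing, and $u(x):=\lim_{n\to\infty}u_n(x)\in[0,\infty]$ exists a.e. The failure of blow-up then gives a point $x_0$ with $\sup_n u_n(x_0)=:M<\infty$.

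Next I would convert this single-point bound into uniform local control by the weak Harnack principle. Each $u_n$ solves $\Delta^2 u_n=\mu a_n u_n+f_n(u_n)+\la b_n\ge 0$ under Navier conditions, so by Lemma \ref{SMP} we have $-\Delta u_n>0$; hence $u_n$ is superharmonic, and the super-mean-value inequality yields $\operatorname*{ess\,inf}_{B_{\theta R}(x_0)}u_n\le u_n(x_0)\le M$. Feeding this into Lemma \ref{WHP} gives a bound for $\|u_n\|_{L^q(B_{\rho R}(x_0))}$ that is uniform in $n$. A Harnack-chain argument over a covering of $\Omega$ by overlapping balls, together with the connectedness of $\Omega$, propagates this control, so that $\{u_n\}$ is bounded in $L^1_{loc}(\Omega)$ and $u\in L^1_{loc}(\Omega)$ is finite a.e.

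I would then upgrade these to the global estimates demanded by Definition \ref{d:weak sol}. Testing \eqref{a-n} against the potential $\phi$ of a nonnegative $\psi\in C^\infty_0(\Omega)$, as in Lemma \ref{l:blow-up} (so that $\phi>0$ in $\Omega$ by Lemma \ref{SMP}), and discarding nonnegative terms gives
\[
\mu\Iom a_n u_n\phi\,dx+\Iom f_n(u_n)\phi\,dx\le\Iom u_n\psi\,dx\le C,
\]
where the last bound is the local $L^1$ control already obtained; this is a global weighted bound on both $a_nu_n$ and $f_n(u_n)$. Testing instead against the torsion-type function $\Phi$ solving $\Delta^2\Phi=1$ with Navier conditions, and using that $\Phi\le C\phi$ in $\Omega$ (both vanish like $\mathrm{dist}(\cdot,\bdw)$ near $\bdw$ and are positive inside), I obtain $\Iom u_n\,dx\le C'$ uniformly, whence $u\in L^1(\Omega)$.

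Finally I would pass to the limit. For $u_n\in W^{2,2}(\Omega)\cap W^{1,2}_0(\Omega)$ and any $\zeta\in C^\infty_0(\Omega)$, integration by parts gives $\Iom u_n\Delta^2\zeta\,dx-\mu\Iom a_n u_n\zeta\,dx=\Iom(f_n(u_n)+\la b_n)\zeta\,dx$. Since $u_n\uparrow u$ with $u\in L^1_{loc}$, and since $au,f(u)\in L^1_{loc}$ by the weighted bounds above, monotone convergence (for the $L^1_{loc}$ convergence of $u_n$, $a_nu_n$, $f_n(u_n)$) together with dominated convergence against the bounded, compactly supported $\zeta$ and $\Delta^2\zeta$ let me send $n\to\infty$ and obtain $\Iom u(\Delta^2\zeta-\mu a\zeta)\,dx=\Iom(f(u)+\la b)\zeta\,dx$; combined with $u\ge u_1>0$, this exhibits $u$ as a very weak solution, the desired contradiction. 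The main obstacle is the second step: turning boundedness of $u_n$ at one point into uniform $L^1_{loc}$ control via the weak Harnack inequality and the connectedness argument, and then globalizing to $L^1(\Omega)$ through the boundary-vanishing weights $\phi$ and $\Phi$.
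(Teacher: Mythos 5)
Your argument is correct and its skeleton coincides with the paper's: argue by contradiction from a point $x_0$ where $u_n(x_0)$ stays bounded, use the weak Harnack principle (Lemma \ref{WHP}) to convert this into uniform local $L^1$ control, test \eqref{a-n} against a positive biharmonic potential to bound the weighted integrals of $a_nu_n$, $f_n(u_n)$ and $b_n$, and pass to the limit by monotone convergence. Where you genuinely diverge is in the endgame. The paper stops once it has an $L^1$ bound on a single ball $B_r(0)$ and exhibits only a \emph{local} very weak solution in $B_{r_1}(0)\Subset B_r(0)$, declaring the contradiction there; strictly speaking a local solution does not contradict the nonexistence of a very weak solution of $(P_{\la})$ in the sense of Definition \ref{d:weak sol} (which is posed on all of $\Omega$), so the paper is implicitly leaning on the corresponding argument of \cite{PP} to close that gap. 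You instead propagate the Harnack estimate over a chain of balls to obtain $L^1_{loc}(\Omega)$ control, and then globalize to $\int_{\Omega}u_n\,dx\le C$ by comparing the torsion function $\Phi$ with the potential $\phi$ of a fixed $\psi$ via $\Phi\le C\phi$ (both comparable to $\mathrm{dist}(\cdot,\bdw)$, by Hopf's lemma applied twice to the second-order Navier system, exactly as in Lemma A.1); this produces a genuine very weak solution on all of $\Omega$ and a cleaner contradiction. You also supply a justification the paper omits, namely that $\operatorname{ess\,inf}_{B_{\theta R}(x_0)}u_n\le u_n(x_0)$, which you derive from superharmonicity of $u_n$ (Lemma \ref{SMP}) and the super-mean-value property. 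The only loose end is cosmetic: $u\ge u_1>0$ presumes $b_1\not\equiv 0$, which Definition \ref{d:blow-up} does not guarantee, but positivity of $u$ follows from $u\ge u_n>0$ for $n$ large.
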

\begin{proof}
Let $u_n\in W^{2,2}(\Omega)\cap W^{1,2}_0(\Omega)$ be the minimal nonnegative solution of \eqref{a-n}. Using the monotonicity property of $a_n, b_n$ and $f_n$, we get $u_{n+1}$ is a supersolution of the equation satisfied by $u_n$. Thus $u_n\leq u_{n+1}$. Therefore to establish the blow-up result, it is enough to show the complete blow-up for the family of minimal solution $u_n$.

We will prove this by method of contradiction. Assume there exists $x_0\in\Omega$ and a positive constant $C$ such that $u_n(x_0)\leq C$. Thus applying weak Harnack inequality (Lemma \ref{WHP}) we have
$$|u_n|_{L^1(B_{\rho R}(x_0))}\leq C ess\inf_{B_{\theta R}(x_0)}u_n\leq Cu_n(x_0)\leq C',$$ where $0<\theta<\rho<1$.
Then following the same argument as in \cite{PP}, we can show that there exists $r>0$ and a positive constant $C=C(r)$ such that 
$$\int_{B_r(0)}u_n dx\leq C, \quad\text{ uniformly in}\quad n\in\N.$$
Therefore, applying monotone convergence theorem we see that, there exists $u\geq 0$ such that $u_n\to u$ in $L^1(B_r(0))$.

Let $\phi$ be the solution to the problem:
\begin{equation*}
\begin{cases}
\Delta^2 \phi=\chi_{B_r(0)} \quad\text{in}\quad\Omega,\\
\phi=0=\Delta\phi \quad\text{on}\quad\bdw.
\end{cases}
\end{equation*}
Clearly $\phi\in W^{4,p}(\Omega)$ since $\chi_{B_r(0)}\in L^p(\Omega)$ for all $p\geq 1$. Taking $\phi$ as a test function in \eqref{a-n}, we have
$$\Iom\displaystyle\left(a_n(x)u_n\phi+f_n(u_n)\phi+\la b_n\phi \right)dx=\int_{B_r(0)}u_n dx\leq C.$$
By monotone convergence theorem and Fatou's lemma, it follows that
$$a_n(x)u_n\uparrow a(x)u \quad\text{in}\quad L^1_{loc}(B_r(0))$$
$$ f_n(u_n)\to f(u)\quad\text{in}\quad L^1_{loc}(B_r(0)) \quad\text{and}\quad b_n(x)\uparrow b(x) \quad\text{in}\quad L^1_{loc}(B_r(0)).$$  
Hence $u$ is a very weak solution to $(P_{\la})$ in $B_{r_1}(0)\Subset B_r(0)$, which contradicts the assumption of this theorem. 
\hfill$\square$
\end{proof}
Combining Lemma \ref{l:blow-up} and Theorem \ref{t:blow-up}, we get the following corollary.
\begin{Corollary}
If $\la>\tilde{\la}^{*}$, then there is complete blow-up.
\end{Corollary}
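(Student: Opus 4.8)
The plan is to read off this statement as an immediate consequence of the two preceding results, so the argument is purely logical once they are in hand. First I would unwind the definition $\tilde{\la}^{*}=\sup\{\la>0:\ (P_{\la})\ \text{has a very weak/distributional solution}\}$. For any fixed $\la>\tilde{\la}^{*}$, the value $\la$ strictly exceeds the supremum of the set of parameters for which $(P_{\la})$ admits a very weak (equivalently, distributional) solution, and hence $\la$ cannot belong to that set. Therefore $(P_{\la})$ has no very weak solution for such $\la$.

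With the nonexistence in place, the conclusion follows by applying Theorem \ref{t:blow-up}. Fixing this $\la>\tilde{\la}^{*}$, the hypothesis that $(P_{\la})$ admits no solution even in the very weak sense is precisely what we have just verified, and Theorem \ref{t:blow-up} then yields complete blow-up in the sense of Definition \ref{d:blow-up}. For completeness one should recall that Lemma \ref{l:blow-up} guarantees $\tilde{\la}^{*}<\infty$, so that the range $\la>\tilde{\la}^{*}$ is genuinely nonempty and the statement has content.

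There is no real obstacle here: the only point meriting a line of justification is that $\la>\sup S$ forces $\la\notin S$, which needs no monotonicity of the solvability interval and is immediate from the definition of supremum. All of the analytic work — the uniform $L^1$ bound obtained from the weak Harnack principle (Lemma \ref{WHP}) and the passage to the limit that would otherwise manufacture a local very weak solution and contradict nonexistence — has already been carried out inside the proof of Theorem \ref{t:blow-up}, so nothing further is required beyond combining the two cited results.
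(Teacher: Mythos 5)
Your proposal is correct and follows exactly the paper's route: the paper derives this corollary by combining Lemma \ref{l:blow-up} (which gives $\tilde{\la}^{*}<\infty$ and, via the definition of the supremum, nonexistence of very weak solutions for $\la>\tilde{\la}^{*}$) with Theorem \ref{t:blow-up}. Nothing further is needed.
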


\section{Stability results}
\begin{Definition}\label{d:stable}
We say that $u\in W^{2,2}(\Omega)\cap W^{1,2}_0(\Omega)$ is a stable solution, if the first eigenvalue of the linearized operator of the equation $(P_{\la})$ is nonnegative, i.e., if 
$$\inf_{\phi\in C^{\infty}_0(\Omega)\setminus\{0\}}\frac{\displaystyle\Iom (|\Delta\phi|^2-\mu a(x)\phi^2-f'(u)\phi^2)dx}{\displaystyle\Iom\phi^2 dx}\geq 0.$$ 
\end{Definition}

\begin{Theorem}\label{t:stability}
Suppose all the assumptions in Theorem \ref{main ex} are satisfied and for $0<\la<\la^{*}$,   let $u_{\la}$ denote the minimal positive solution of $(P_{\la})$.  Then $u_{\la}$ is stable.
\end{Theorem}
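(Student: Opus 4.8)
Theorem \ref{t:stability} asserts that the minimal positive solution $u_\la$ is stable, i.e., the linearized operator $\Delta^2 - \mu a(x) - f'(u_\la)$ has nonnegative first eigenvalue.

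The plan is to characterize stability spectrally and then exploit the monotonicity of the branch $\la\mapsto u_{\la}$ to manufacture a positive supersolution of the linearized operator $L=\Delta^2-\mu a(x)-f'(u_{\la})$. By Definition \ref{d:stable}, stability of $u_{\la}$ means that the quadratic form $\int_\Omega(|\Delta\phi|^2-\mu a\phi^2-f'(u_{\la})\phi^2)\,dx$ is nonnegative on $C^\infty_0(\Omega)$. If $f'(u_{\la})\equiv 0$ this is immediate from \eqref{tilde-ga}, so I may assume $f'(u_{\la})\not\equiv 0$ and reformulate: stability is equivalent to $\sigma_1\geq 1$, where
$$\sigma_1=\inf_{\phi\in C^\infty_0(\Omega)\setminus\{0\}}\frac{\displaystyle\int_\Omega\left(|\Delta\phi|^2-\mu a(x)\phi^2\right)dx}{\displaystyle\int_\Omega f'(u_{\la})\phi^2\,dx}$$
is the principal weighted eigenvalue of $\Delta^2-\mu a$ with weight $f'(u_{\la})$ under Navier boundary conditions. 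It will in fact suffice to prove the strict inequality $\sigma_1>1$.

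First I would establish the existence of a strictly positive principal eigenfunction, that is, some $0<\psi_1\in W^{2,2}(\Omega)\cap W^{1,2}_0(\Omega)$ solving $\Delta^2\psi_1-\mu a\psi_1=\sigma_1 f'(u_{\la})\psi_1$ weakly. The comparison principle (Lemma \ref{comp prin}) shows that $G=(\Delta^2-\mu a)^{-1}$ is positivity preserving, and the embedding $W^{2,2}(\Omega)\cap W^{1,2}_0(\Omega)\hookrightarrow L^2(\Omega)$ is compact; hence the operator $K\colon w\mapsto G\big(f'(u_{\la})w\big)$ is compact and positive on the natural cone. Krein--Rutman then produces $\sigma_1>0$ attained by an eigenfunction $\psi_1$, whose strict positivity I would read off from the strong maximum principle (Lemma \ref{SMP}).

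Next I would build a positive strict supersolution of $L$ from the branch itself. Fix $\Lambda$ with $\la<\Lambda<\la^*$; by the definition of $\la^*$ and Proposition \ref{p:ex}, $(P_\Lambda)$ has a minimal solution $u_\Lambda$, and since $\Lambda>\la$ makes $u_\Lambda$ a supersolution of $(P_{\la})$, minimality gives $u_{\la}\leq u_\Lambda$. Setting $v=u_\Lambda-u_{\la}$, one computes $\Delta^2 v-\mu a\,v=f(u_\Lambda)-f(u_{\la})+(\Lambda-\la)b$; the strong maximum principle yields $v>0$, while convexity of $f$ (giving $f(u_\Lambda)-f(u_{\la})\geq f'(u_{\la})v$) together with $(\Lambda-\la)b\geq 0$ and $b\not\equiv 0$ produces the strict inequality $\Delta^2 v-\mu a\,v> f'(u_{\la})\,v$ in $\Omega$.

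Finally I would close the argument by pairing $v$ with $\psi_1$. Testing the weak inequality for $v$ against $\psi_1$ and the eigenvalue equation for $\psi_1$ against $v$, and using that the bilinear form $(u,w)\mapsto\int_\Omega\Delta u\,\Delta w-\mu\int_\Omega a\,uw$ is symmetric, the two left-hand sides coincide, so $\sigma_1\int_\Omega f'(u_{\la})\psi_1 v>\int_\Omega f'(u_{\la})\psi_1 v$. Because $f'(u_{\la})\not\equiv 0$, $v>0$ and $\psi_1>0$ force $\int_\Omega f'(u_{\la})\psi_1 v>0$, whence $\sigma_1>1$ and $u_{\la}$ is stable. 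The main obstacle is the first step: in contrast with the second-order setting, a sign-changing minimizer cannot be replaced by its modulus, so positivity of $\psi_1$ is not automatic and must be extracted from the positivity-preserving structure of the Navier problem through Krein--Rutman and the strong maximum principle.
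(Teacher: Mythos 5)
Your strategy is genuinely different from the paper's. The paper follows Dupaigne--Nedev: it truncates $a$ and $b$ to bounded $a_n,b_n$ so that the minimal solutions $u_n$ of the truncated problems lie in $W^{4,p}(\Omega)$ with $p>N$, hence are bounded with $f'(u_n)\in L^\infty(\Omega)$; it then runs an implicit-function-theorem continuation along the maximal branch $\la\mapsto u(\la)$, identifies $u_n$ with the branch, deduces $\la_1^n(\Delta^2-\mu a_n-f'(u_n))\ge 0$ from the fact that the linearization cannot become singular before $\la^{*}$, and finally lets $n\to\infty$ using $f'(u_n)\uparrow f'(u_{\la})$ and monotone convergence in the Rayleigh quotient. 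Your route is the classical convexity proof of semi-stability: compare $u_{\la}$ with $u_{\Lambda}$ for $\la<\Lambda<\la^{*}$ to get a positive strict supersolution $v=u_{\Lambda}-u_{\la}$ of the linearized operator, then test against a positive principal eigenfunction of the weighted problem to force $\sigma_1>1$. The second half of your argument is sound: $v>0$ by Lemma \ref{SMP}, convexity gives $f'(u_{\la})v\le f(u_{\Lambda})-f(u_{\la})\in L^2(\Omega)$ so the relevant integrals converge, and symmetry of the form together with $\Iom b\psi_1\,dx>0$ yields the strict inequality.

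The gap is in your first step, and it is precisely the difficulty the paper's truncation is designed to avoid. To apply Krein--Rutman to $Kw=G(f'(u_{\la})w)$ you need $K$ to be a well-defined compact operator, and to realize $\sigma_1$ as an attained infimum of the weighted Rayleigh quotient you need $\phi\mapsto\Iom f'(u_{\la})\phi^2\,dx$ to be compact relative to the $W^{2,2}(\Omega)\cap W^{1,2}_0(\Omega)$ norm. Nothing in the hypotheses of Theorem \ref{main ex} supplies this: you only know $f(u_{\la})\in L^2(\Omega)$, and convexity does not upgrade that to $f'(u_{\la})\in L^{\infty}(\Omega)$, nor even to $f'(u_{\la})\in L^{N/4}(\Omega)$, which is roughly the threshold at which the weighted form becomes compact via the embedding of $W^{2,2}(\Omega)\cap W^{1,2}_0(\Omega)$ into $L^{2N/(N-4)}(\Omega)$. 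Without attainment there is no $\psi_1$ to test against and the argument stalls; you flag the positivity of $\psi_1$ as the main obstacle, but positivity is the easier half --- existence is the real issue. To repair the proof you would either need to impose extra integrability on $f'(u_{\la})$, or do what the paper does: establish the stability inequality first for the truncated problems, where $u_n$ is bounded and the eigenvalue problem is classical, and then pass to the limit in the quadratic form by monotone convergence.
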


\begin{proof}
Following the idea of Dupaigne and Nedev \cite{DN}, we prove this theorem. Let $a_n(x)=\text{min}(a(x), n)$, $b_n=\text{min}(b(x), n)$ and $u_n\in W^{2,2}(\Omega)\cap W^{1,2}_0(\Omega)$ denote the minimal positive solution of the following problem
\begin{equation}\label{a-n1}
\begin{cases}
\Delta^2 u_n-\mu a_n(x)u_n=f(u_n)+\la b_n(x) \quad\text{in}\quad\Omega,\\
u_n=0=\Delta u_n \quad\text{on}\quad\bdw.
\end{cases}
\end{equation}
By Lemma \ref{l:supersol}, $u_n$ is well defined since $u_{\la}$ is a supersolution of \eqref{a-n1}. Let $\la_1^n(\Delta^2-\mu a_n(x)-f'(u_n))$ denote the 1st eigenvalue of the linearized operator $\Delta^2-\mu a_n(x)-f'(u_n)$.

\vspace{2mm}

{\bf Claim}: $\la_1^n(\Delta^2-\mu a_n(x)-f'(u_n))\geq 0$. \\
To prove the claim, we choose $p>N$. Define, $I: \R\times W^{4,p}(\Omega)\to L^p(\Omega)$ as follows
$$I(\la, u)=\Delta^2 u-\mu a_n(x)u-f(u)-\la b_n.$$ An easy computation using \eqref{tilde-ga} and implicit function theorem, (see \cite{DN}) it follows that there exists a unique maximal curve $\la\in [0, \la^{\#})\to u(\la)$ such that
$$I(\la, u(\la))=0 \quad\text{and}\quad I_u(\la, u(\la))\in Iso(W^{4,p}. L^p).$$
If $0<\la<\la^{\#}$, then $u_n\leq u(\la)$, since $u_n$ is the minimal positive solution of \eqref{a-n1}. Thus $f(u_n)\leq f(u(\la))$. Moreover, $I(\la, u(\la))=0$ implies $f(u(\la))=\Delta^2 u(\la)-\mu a_n(x)u(\la)-\la b_n(x)\in L^p(\Omega)$, which in turn implies $f(u_n)\in L^p(\Omega)$. Therefore by elliptic regularity theory, $u_n$ is in the domain of $I$ and hence $u_n=u(\la)$.\\
Following the same method as in \cite{DN}, we can show that if $0<\la<\la^{*}$, $u_n$ is in the domain of $I$. Thus $\la^{\#}=\la^{*}$ (otherwise we could extend the curve $u(\la)$ beyond $\la^{\#}$ contradicting its maximality). We also claim that the first eigenvalue of $I_u(\la, u_n)$ does not vanish for any $\la<\la^{*}$. To see this, assume $\phi$ is an the eigen function corresponding to this first eigenvalue. If the first eigenvalue vanishes for some $\la_0<\la^{*}$, then we have $\Delta^2\phi-\mu a(x)\phi-f'(u_n)\phi=0$, i.e., $I_u(\la_0, u_n)=0$ but we know that $I_u(\la, u)$ can not vanish for any $\la<\la^{\#}$ (otherwise $u(\la)$ will not be the maximal curve). Consequently, since $\la^{\#}=\la^{*}$, we can say that  the first eigenvalue of $I_u(\la, u_n)$ does not vanish for any $\la<\la^{*}$. Moreover, by \eqref{tilde-ga} we know first eigenvalue of $I_u(0,0)$ is strictly positive. Therefore we conclude that $\la_1^n(\Delta^2-\mu a_n(x)-f'(u_n))\geq 0$ for every $\la\in[0,\la^{*})$.

Also, $\{u_n\}$ is a nondecreasing sequence and converges to a solution of $(P_{\la})$ in $W^{2,2}(\Omega)\cap W^{1,2}_0(\Omega)$. Since $u_n\leq u_{\la}$, $\lim_{n\to\infty}u_n$ has to be the minimal solution $u_{\la}$. Therefore by monotone convergence theorem we conclude the first eigenvalue $\la_1(\Delta^2-\mu a(x)-f'(u_{\la}))\geq 0$ which completes the proof.
\hfill$\square$
\end{proof}

\begin{Theorem}
Suppose all the assumptions in Theorem \ref{main ex} hold and $u_{\la}$ denote the minimal positive solution of $(P_{\la})$. We also assume \eqref{assum-f3} is satisfied. If $\la=\la^{*}$ and $b\in L^p(\Omega)$ for some $p>\frac{N}{3}$, then $u_{\la^*}$ is the only positive solution of $(P_{\la^{*}})$ which belongs to $\in W^{2,2}(\Omega)\cap W^{1,2}_0(\Omega)$.
\end{Theorem}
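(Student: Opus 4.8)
The plan is to show that any positive solution $v\in W^{2,2}(\Omega)\cap W^{1,2}_0(\Omega)$ of $(P_{\la^*})$ must coincide with the minimal solution $u_{\la^*}$. Since $u_{\la^*}$ is minimal, $v$ is a supersolution and Lemma \ref{l:supersol} gives $v\geq u_{\la^*}$; set $w:=v-u_{\la^*}\geq 0$, so the goal is to prove $w\equiv 0$. Subtracting the two equations, $w$ solves $\Delta^2 w-\mu a(x)w=f(v)-f(u_{\la^*})$ weakly, with $w=0=\Delta w$ on $\bdw$, and by convexity of $f$ one has $f(v)-f(u_{\la^*})\geq f'(u_{\la^*})w\geq 0$ pointwise.

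First I would upgrade regularity, which is where $b\in L^p(\Omega)$, $p>\tfrac{N}{3}$, enters: rewriting the Navier problem as a second order system and bootstrapping through elliptic theory (using the a priori energy bound of the preceding theorem, where \eqref{assum-f3} is used) I expect $u_{\la^*},v\in W^{4,p}(\Omega)\hookrightarrow C^1(\overline{\Omega})$, in particular bounded, so that $f'(u_{\la^*})\in L^\infty(\Omega)$ and the linearized operator $L:=\Delta^2-\mu a(x)-f'(u_{\la^*})$ with Navier conditions is self-adjoint, bounded below by \eqref{tilde-ga}, with discrete spectrum and a simple principal eigenvalue $\la_1(L)$ admitting a positive eigenfunction $\phi_1$ (via Lemma \ref{SMP}). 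I would then show the extremal solution is \emph{degenerate}, $\la_1(L)=0$. By Theorem \ref{t:stability} each $u_{\la}$ ($\la<\la^*$) is stable, and passing to the monotone limit $u_{\la}\uparrow u_{\la^*}$ (so $f'(u_\la)\uparrow f'(u_{\la^*})$) yields semistability $\la_1(L)\geq 0$. If instead $\la_1(L)>0$, then $L$ is an isomorphism of $W^{4,p}(\Omega)$ onto $L^p(\Omega)$, and the implicit function theorem applied to $I(\la,u)=\Delta^2u-\mu a(x)u-f(u)-\la b$ at $(\la^*,u_{\la^*})$ would produce positive $W^{2,2}(\Omega)\cap W^{1,2}_0(\Omega)$ solutions for some $\la>\la^*$, contradicting Theorem \ref{main ex}. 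Hence $\la_1(L)=0$ and $L\phi_1=0$ with $\phi_1>0$.

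To finish, I would pair the equation for $w$ with $\phi_1$; using $L\phi_1=0$, self-adjointness and the Navier conditions the fourth order terms cancel, leaving
$$\Iom \phi_1\big(f(v)-f(u_{\la^*})-f'(u_{\la^*})w\big)\,dx=0.$$
The integrand is nonnegative (convexity and $\phi_1>0$), hence vanishes a.e., so $f$ is affine with slope $f'(u_{\la^*}(x))$ on each interval $[u_{\la^*}(x),v(x)]$. Since $u_{\la^*},v\in C^1(\overline{\Omega})$ are positive in the connected set $\Omega$ and vanish on $\bdw$, these intervals overlap and cover $(0,\max_{\overline{\Omega}}v)$, forcing $f$ to be affine there with a single slope; together with $f(0)=0=f'(0)$ from \eqref{assum-f} this gives $f\equiv 0$ on the range of $v$. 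Then $u_{\la^*}$ and $v$ both solve the linear problem $\Delta^2u-\mu a(x)u=\la^* b$, whose solution is unique by Lemma \ref{l:ex} and the comparison principle (Lemma \ref{comp prin}), so $u_{\la^*}=v$, i.e. $w\equiv 0$.

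The main obstacle is the regularity-and-degeneracy step: establishing $u_{\la^*}\in W^{4,p}(\Omega)\cap L^\infty(\Omega)$ in the presence of the singular potential $a$ (this is exactly why both $b\in L^p$, $p>\tfrac N3$, and the superlinearity condition \eqref{assum-f3} are imposed) and then justifying the implicit-function continuation beyond $\la^*$ in this fourth order Navier setting. Once $\la_1(L)=0$ with a strictly positive eigenfunction is secured, the concluding comparison argument is routine.
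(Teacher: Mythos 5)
Your argument is the classical Crandall--Rabinowitz/Brezis--V\'azquez route: show the extremal solution is degenerate, extract a positive principal eigenfunction $\phi_1$ of the linearized operator, and test the difference of two solutions against $\phi_1$ to force $f$ to be affine. The paper does something genuinely different and deliberately avoids the linearized operator at $\la^*$: with $u$ minimal and $v$ a second solution it forms the midpoint $w=\frac{u+v}{2}$, which by convexity is a supersolution of $(P_{\la^*})$ lying in $W^{4,2}(\Omega)$; Lemma A.1 shows that at the extremal parameter every such supersolution is already a solution (otherwise one manufactures a supersolution of $(P_{\la^*+\epsilon C^{-1}})$, and this is the \emph{only} place $b\in L^p$, $p>\frac N3$, is used --- to get $(\Delta^2)^{-1}b\in C^{1,\alpha}$ and hence $\leq C\,\delta(x)$). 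Equality in the convexity inequality then forces $f$ to be affine on $[u(x),v(x)]$, Lemma A.2 gives $\operatorname*{ess\,inf}\theta=0$ for $\theta=\epsilon u+(1-\epsilon)v$, and $f(0)=f'(0)=0$ yields $f\equiv 0$ on the relevant range, after which coercivity \eqref{tilde-ga} (not a comparison argument) kills $v-u$.

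The difficulty is that the steps you yourself identify as ``the main obstacle'' are not obtainable under the paper's hypotheses, so they are genuine gaps rather than routine verifications. First, $u_{\la^*},v\in W^{4,p}(\Omega)\hookrightarrow C^1(\overline\Omega)$ is out of reach: the right-hand side of the equation contains $\mu a(x)u$, and the only control on $a$ is the quadratic-form inequality \eqref{assum-a}, which gives $a(x)u\in L^2(\Omega)$ and nothing better; one gets $W^{4,2}(\Omega)$ only, and $W^{4,2}(\Omega)\hookrightarrow L^\infty(\Omega)$ fails for $N\geq 8$, so $f'(u_{\la^*})\in L^\infty$ and your $C^1$ interval-covering argument are unjustified (the integrability of $b$ cannot be transferred to $u$ because of the singular potential). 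Second, the existence of a simple principal eigenvalue with a \emph{positive} eigenfunction for $\Delta^2-\mu a(x)-f'(u_{\la^*})$ under Navier conditions, with $a$ merely in $L^1_{loc}$ and possibly as singular as an inverse fourth power, is a substantive positivity-preservation statement that is proved nowhere in the paper and would require a Krein--Rutman argument for the associated cooperative second-order system with unbounded coefficients. Third, the implicit-function continuation at $(\la^*,u_{\la^*})$ needed for $\la_1(L)=0$ presupposes the same unavailable $W^{4,p}$ framework (the paper's Theorem \ref{t:stability} runs this argument only for the truncated potentials $a_n=\min(a,n)$ and for $\la<\la^*$). Without these three inputs the pairing identity $\Iom\phi_1\bigl(f(v)-f(u_{\la^*})-f'(u_{\la^*})w\bigr)\,dx=0$ cannot be derived, so the proof as proposed does not close.
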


\begin{proof}
Suppose the theorem does not hold and $u$ and $v$ are two distinct positive solutions of $(P_{\la^{*}})$, where $u, v\in W^{2,2}(\Omega)\cap W^{1,2}_0(\Omega) $. Let $u$ be the minimal positive solution. Therefore $u\leq v$. Applying strong maximal principle we can easily check that $u<v$ in $\Omega$. Since $u$ and $v$ are solution, by Definition \eqref{d: sol} we have $f(u), f(v)\in L^2(\Omega)$. Thus applying \eqref{assum-a}, we get $\mu a(x)u+f(u)+\la^{*}b\in L^2(\Omega)$. This together with the elliptic regularity theory gives $u\in W^{4,2}(\Omega)\cap W^{1,2}_0(\Omega)$. Similarly same result holds for $v$ as well. Define $w=\frac{u+v}{2}$. Then $w\in W^{4,2}(\Omega)\cap W^{1,2}_0(\Omega)$ and by convexity of $f$, we have
$$f(w)=\displaystyle f\left(\frac{u+v}{2}\right)\leq\frac{f(u)+f(v)}{2}\in L^2(\Omega).$$ Thus,
$$\Delta^2 w-\mu a(x)w=\frac{f(u)+f(v)}{2}+\la^{*}b\geq f(w)+\la^{*}b.$$
Thus $w$ is a supersolution of $(P_{\la^{*}})$. By Lemma A.1, it follows that $w$ is a solution to $(P_{\la^{*}})$. As a consequence, inequality on the above expression becomes equality and by convexity of $f$ we conclude that $f$ is linear on $[u(x), v(x)]$ for almost every $x\in\Omega$. For $\epsilon\in(0,1)$, define $\theta=\epsilon u+(1-\epsilon)v$. Therefore $f''(\theta(x))$ exists for a.e $x\in\Omega$ and $f''(\theta(x))=0$ a.e. $x\in\Omega$. 
This implies $\na(f'(\theta))=0$ a.e. in $\Omega$, which in turn implies $f'(\theta)=C$ a.e. in $\Omega$ and $f(\theta)=C\theta+D$ a.e. in $\Omega$ for some constant $C$ and $D$. Moreover, using convexity of $f$, this implies $f(t)=Ct+D$ for $t\in[\text{ess inf}\ \theta, \text{ess sup}\ \theta]$. Applying Lemma A.2, we have ess inf $\theta=0$. Since $f(0)=0=f'(0)$, we get $f\equiv 0$ on $[0, \text{ess sup}\ \theta]$. As $\epsilon>0$  arbitrary, we can conclude $f\equiv 0$ on $[0, \text{ess sup}\ v]$. Therefore $u$ and $v$ both satisfy the following prob:
\begin{equation*}
\begin{cases}
\Delta^2 u-\mu a(x)u=\la^{*} b(x) \quad\text{in}\quad\Omega,\\
u=0=\Delta u \quad\text{on}\quad\bdw.
\end{cases}
\end{equation*}
This in turn implies, $v-u$ satisfies the following prob:
\begin{equation*}
\begin{cases}
\Delta^2(v-u)-\mu a(x)(v-u)=0 \quad\text{in}\quad\Omega,\\
v-u=0=-\Delta(v- u) \quad\text{on}\quad\bdw.
\end{cases}
\end{equation*}
This contradicts \eqref{tilde-ga} since $v-u\in W^{2,2}(\Omega)\cap W^{1,2}_0(\Omega)$. Hence $u=v$. 
\hfill$\square$
\end{proof}

\appendix
\section{Appendix}

\begin{Lemma} If $b\in L^p(\Omega)$ for some $p>\text{max} (2,\frac{N}{3})$ and $w\in W^{4,2}(\Omega)\cap W^{1,2}_0(\Omega)$ is a supersolution of $(P_{\la^{*}})$, then $w$ is a solution of $(P_{\la^{*}})$.
\end{Lemma}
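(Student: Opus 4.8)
The plan is to rule out that $w$ is a \emph{strict} supersolution, by comparing it with the minimal solution and exploiting the criticality of $\la^*$. Let $u:=u_{\la^*}$ be the minimal positive solution of $(P_{\la^*})$ provided by Lemma \ref{l:supersol}; since $w\geq 0$ is a supersolution, minimality forces $u\leq w$. Because $f(u)\in L^2(\Omega)$, $\mu a(x)u\in L^2(\Omega)$ by \eqref{assum-a}, and $b\in L^p(\Omega)\subset L^2(\Omega)$, elliptic regularity for $\Delta^2$ under the Navier conditions gives $u\in W^{4,2}(\Omega)\cap W^{1,2}_0(\Omega)$, so $z:=w-u\in W^{4,2}(\Omega)\cap W^{1,2}_0(\Omega)$ with $z\geq 0$. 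Suppose, toward a contradiction, that $w$ is not a solution, i.e.\ the defect $g:=\Delta^2 w-\mu a(x)w-f(w)-\la^* b$ obeys $g\geq 0$ and $g\not\equiv 0$; then necessarily $z\not\equiv 0$ (otherwise $w=u$ would give $g\equiv 0$).

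Next I would pass to the linearized operator $L:=\Delta^2-\mu a(x)-f'(u)$. Subtracting the equation for $u$ from the defining inequality for $w$ and invoking the convexity bound $f(w)-f(u)\geq f'(u)(w-u)$, one obtains the identity
$$Lz=g+\big(f(w)-f(u)-f'(u)z\big)\geq g\geq 0,$$
so $z$ is a nonnegative supersolution of $L$ with $Lz\not\equiv 0$. Let $\phi_1>0$ be the principal eigenfunction of $L$ and $\la_1(L)$ its first eigenvalue; here $\la_1(L)\geq 0$ by the stability Theorem \ref{t:stability}, and positivity of $\phi_1$ comes from the maximum principle framework (Lemma \ref{SMP}). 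Testing $Lz\geq 0$ against $\phi_1$ and integrating by parts, which is legitimate because $z$ and $\phi_1$ satisfy the Navier conditions and are sufficiently regular, gives
$$0<\Iom (Lz)\phi_1\,dx=\Iom z\,(L\phi_1)\,dx=\la_1(L)\Iom z\phi_1\,dx,$$
the strict inequality using $Lz\geq 0$, $Lz\not\equiv 0$ and $\phi_1>0$. Since $\Iom z\phi_1\,dx>0$, I conclude $\la_1(L)>0$; that is, $u_{\la^*}$ would be strictly stable.

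To close the argument I would contradict $\la^*=\sup\Lambda$ by continuation. As $\la_1(L)>0$, the map $I(\la,v)=\Delta^2 v-\mu a(x)v-f(v)-\la b$ has linearization $I_v(\la^*,u_{\la^*})=L$ an isomorphism, and the implicit function theorem---set up exactly as in the proof of Theorem \ref{t:stability} following \cite{DN}, in $W^{4,p}(\Omega)\cap W^{1,2}_0(\Omega)$ with $p>\max(2,\tfrac N3)$ so that $W^{4,p}(\Omega)\hookrightarrow C^1(\overline{\Omega})$---continues the minimal branch $\la\mapsto u(\la)$ past $\la^*$. For $\la\in(\la^*,\la^*+\delta)$ the function $u(\la)$ stays close to $u_{\la^*}>0$ in $C^1$, hence is positive and solves $(P_\la)$, contradicting the definition of $\la^*$. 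Therefore $g\equiv 0$ and $w$ solves $(P_{\la^*})$.

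The main obstacle is making the spectral and continuation steps rigorous in the presence of the singular, only locally integrable potential $a$: one must verify that $\mu a(x)$ and $f'(u)$ define an admissible (self-adjoint, suitably compact) perturbation so that $\la_1(L)$ is attained at a strictly positive $\phi_1$, and that the implicit-function/regularity scheme genuinely runs in $W^{4,p}$. This is exactly where the hypothesis $b\in L^p(\Omega)$ with $p>\max(2,\tfrac N3)$ is indispensable, since it upgrades the regularity of $u_{\la^*}$ (and of $\phi_1$) enough to justify both the integration by parts above and the application of the implicit function theorem.
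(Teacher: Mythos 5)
Your strategy (strict supersolution $\Rightarrow$ strict stability of $u_{\la^*}$ $\Rightarrow$ continuation of the minimal branch past $\la^*$ by the implicit function theorem) is the classical Brezis--V\'azquez/Martel route for second-order extremal solutions, and it is genuinely different from what the paper does. Unfortunately, in the present fourth-order singular setting it rests on two steps that you flag as ``the main obstacle'' but do not close, and they are in fact the whole difficulty. First, your spectral step needs a principal eigenpair $(\la_1(L),\phi_1)$ of $L=\Delta^2-\mu a(x)-f'(u_{\la^*})$ with $\phi_1>0$. For fourth-order operators positivity of the first eigenfunction is not automatic, and here the potential $\mu a+f'(u_{\la^*})$ is only in $L^1_{loc}$; nothing in the paper establishes that the infimum in Definition \ref{d:stable} is attained, let alone by a positive eigenfunction (the paper's own stability proof deliberately works with truncations $a_n=\min(a,n)$ precisely to avoid this). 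Without $\phi_1>0$ the inequality $0<\int_\Omega (Lz)\phi_1\,dx$ collapses. Second, the continuation step requires $I_v(\la^*,u_{\la^*})$ to be an isomorphism of $W^{4,p}$ onto $L^p$ with $p>N/3$; but you only know $f(u_{\la^*})\in L^2$ and $\mu a(x)u_{\la^*}\in L^2$, so $u_{\la^*}\in W^{4,2}$ and there is no justification for the $W^{4,p}$ regularity (nor for $\mu a(\cdot)+f'(u_{\la^*})$ being an admissible perturbation in $L^p$). Your guess that the hypothesis $b\in L^p$, $p>\max(2,N/3)$, is there to upgrade the regularity of $u_{\la^*}$ and $\phi_1$ is not how it is used.

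The paper's proof is much more elementary and bypasses spectral theory entirely. It encodes the failure of $w$ to be a solution in the nonnegative, nontrivial defect $\nu=\Delta^2w-\mu a w-f(w)-\la^*b\in L^2(\Omega)$, solves $\Delta^2\psi=\nu$ with Navier conditions and gets $\psi\geq\epsilon\,\delta(x)$ from the second-order maximum principle, and solves $\Delta^2\eta=b$; the hypothesis $b\in L^p$, $p>N/3$, is used exactly here to get $\eta\in C^{1,\alpha}$ and hence the barrier bound $\eta\leq C\delta(x)$. Then $v=w+\epsilon C^{-1}\eta-\psi\leq w$ is checked directly to be a supersolution of $(P_{\la^*+\epsilon C^{-1}})$, which contradicts the definition of $\la^*$ via Lemma \ref{l:supersol} and Proposition \ref{p:ex}. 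If you want to salvage your approach you would have to prove, for this singular Navier problem, the existence and positivity of the principal eigenfunction of $L$ and the $W^{4,p}$ regularity of $u_{\la^*}$; as written, the proof has a genuine gap at both points.
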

\begin{proof}
Let $w$ be a supersolution of $(P_{\la^{*}})$ and not a solution.  Define, $\nu\in\mathcal{D}'(\Omega)$ by $$\nu(\phi)=\Iom w(\Delta^2\phi)-\displaystyle\left(\mu a(x)w+f(w)+\la^{*}b\right)\phi \quad\forall\ \phi\in C^{\infty}_0(\Omega).$$
Since $w$ is a supersolution, by Definition \ref{d: sol} we have $f(w)\in L^2(\Omega)$. Therefore thanks to \eqref{assum-a}, we get $\nu\in L^2(\Omega)$. Moreover, $w$ is a supersolution implies $\nu\geq 0$.  $w$ is not a solution implies $\nu\not\equiv0$. Consider the following problem:
\begin{equation*}
\begin{cases}
\Delta^2\psi=\nu \quad\text{in}\quad\Omega,\\
\psi=0=\Delta\psi \quad\text{on}\quad\bdw.
\end{cases}
\end{equation*}
We can break this problem into system of second order Dirichlet problem by defining $$-\Delta\psi=\tilde{\psi} \quad\text{in}\quad\Omega, \quad \psi=0 \quad\text{on}\quad\bdw, $$  
$$-\Delta\tilde{\psi}=\nu \quad\text{in}\quad\Omega, \quad \tilde{\psi}=0 \quad\text{on}\quad\bdw.$$ Then by weak maximum principle it is easy to check that 
$\psi>\epsilon\delta(x)$ for some $\epsilon>0$, where $\delta(x)=\text{dist}(x,\bdw).$ Next we consider the problem:
\begin{equation*}
\begin{cases}
\Delta^2\eta=b \quad\text{in}\quad\Omega,\\
\eta=0=\Delta\eta \quad\text{on}\quad\bdw.
\end{cases}
\end{equation*}
As before we break this problem into system of equations as follows:
$$-\Delta\eta=\tilde{\eta} \quad\text{in}\quad\Omega, \quad \eta=0 \quad\text{on}\quad\bdw, $$  
$$-\Delta\tilde{\eta}=b \quad\text{in}\quad\Omega, \quad \tilde{\eta}=0 \quad\text{on}\quad\bdw.$$ 
Since $b\in L^p(\Omega)$ for some $p>\frac{N}{3}$, using theory of elliptic regularity and Soblev embedding theorem, we get $\tilde{\eta}\in L^{p^*}(\Omega)$ where $p^*=\frac{Np}{N-2p}>N$. Therefore $\eta\in C^{1,\alpha}(\Omega)$ for some $\alpha\in(0,1)$. Hence $\eta<C\delta(x)$ in $\Omega$ for some $C\in(0,\infty)$.  Define, $v=w+\epsilon C^{-1}\eta-\psi$. Clearly $v<w$ in $\Omega$ and $v\in W^{2,2}(\Omega)\cap W^{1,2}_0(\Omega)$. Also,
$$\Delta^2v=\Delta^2w+\epsilon C^{-1}b-\nu=\mu a(x)w+f(w)+\la^{*}b+\nu+\epsilon C^{-1}b-\nu\geq \mu a(x)v+f(v)+(\la^{*}+\epsilon C^{-1})b.$$
As a result, $v$ is a supersolution to $(P_{\la^{*}+\epsilon C^{-1}})$. Hence $(P_{\la^{*}+\epsilon C^{-1}})$ has a solution contradicting the extremality of $\la^{*}$. 
\hfill$\square$
\end{proof}
The next lemma is in the spirit of \cite[Lemma 3.2]{DN}.
\begin{Lemma}
If $u\in L^1(\Omega)$ is an nonnegative distributional solution of $\Delta^2 u=h \quad\text{in}\quad\Omega$, where $h\in L^1(\Omega)$, then ess inf $u=0$. 
\end{Lemma}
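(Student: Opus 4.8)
The plan is to argue by contradiction, assuming $\operatorname*{ess\,inf}_\Omega u=m>0$, and to exploit the Navier boundary condition to reduce the fourth order equation to a second order system in which the boundary behaviour of $u$ can be controlled. Since the notion of distributional solution only involves test functions in $C_0^\infty(\Omega)$, the conclusion genuinely depends on $u$ carrying zero boundary data (a nonzero constant added to any solution is again a solution with the same $h$); I will therefore use that $u$ vanishes on $\bdw$ in the very weak sense, as is the case for every $u$ produced in the paper, which lies in $W^{1,2}_0(\Omega)$, and show that this forces $u$ to fall below any positive threshold in a thin boundary layer.

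First I would set $-\Delta u=z$ and $-\Delta z=h$ in $\Omega$ with $u=0=z$ on $\bdw$, so that $\Delta^2u=h$ splits into two second order Dirichlet problems, exactly as in the proof of Lemma A.1. To make this rigorous for $h\in L^1(\Omega)$, I would approximate $h$ by the truncations $h_k=\min(h,k)$ (and, if $h$ changes sign, also from below), solve the regularized systems with $L^2$ data, and pass to the limit using the Comparison Principle (Lemma \ref{comp prin}) together with monotone convergence. Since in the intended application $h\ge0$, the function $u$ is a supersolution of $\Delta^2U=0$ with Navier data, so the Strong Maximum Principle (Lemma \ref{SMP}) yields $z>0$ and $u>0$ in $\Omega$, consistent with $u\ge0$.

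The heart of the argument is an upper bound for $u$ near $\bdw$. Let $E$ solve $\Delta^2E=1$ with Navier conditions, built as in Lemma A.1 from the system $-\Delta E=F$, $-\Delta F=1$ with zero Dirichlet data; then $c\,\delta(x)\le E(x)\le C\,\delta(x)$, where $\delta(x)=\mathrm{dist}(x,\bdw)$. Comparing $u$ with a suitable multiple of $E$ through Lemma \ref{comp prin}, and using the $L^1$ bound on $u$ and $z$ together with the Weak Harnack Principle (Lemma \ref{WHP}) to control $z$ from its $L^1$ norm, I expect to obtain $u(x)\le \omega(\delta(x))$ for some modulus $\omega$ with $\omega(0^+)=0$. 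In particular $u<m/2$ on the set $\{\delta(x)<\tau\}$ for $\tau$ small, a set of positive measure, which contradicts $u\ge m$ a.e.; hence $\operatorname*{ess\,inf}_\Omega u=0$.

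The main obstacle is precisely this boundary decay estimate: for merely $L^1$ data one cannot argue pointwise, so the smallness of $u$ near $\bdw$ must be extracted from the very weak formulation by testing against solutions of auxiliary Navier problems whose right-hand sides concentrate in shrinking boundary layers, and by transferring the decay of the torsion-type function $E\sim\delta(x)$ to $u$ via the Comparison Principle. Once this boundary upper bound is in place the contradiction is immediate, so the entire difficulty is concentrated in quantifying how the zero boundary trace of $u$ propagates into a uniform smallness of $u$ near $\bdw$ under only $L^1$ control of $h$.
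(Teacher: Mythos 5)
Your opening observation is correct and worth making explicit: with only $C^{\infty}_0(\Omega)$ test functions the equation is insensitive to adding a positive constant to $u$, so the zero Navier data must enter somewhere (the paper uses it only implicitly, in its very last line, when the equation is tested against the global torsion function). However, your argument has a genuine gap exactly where you locate it yourself: the boundary decay estimate that carries the whole proof is announced (``I expect to obtain $u(x)\leq\omega(\delta(x))$'') but never established, and in the pointwise form you aim for it is actually false for general $h\in L^1(\Omega)$. A comparison of $u$ with a multiple $ME$ of the torsion-type function via Lemma \ref{comp prin} would require the pointwise inequality $h\leq M=\Delta^2(ME)$, which $L^1$ data do not provide; and the Green function representation gives at best $u(x)\leq C\,\delta(x)\int_{\Omega}\delta(y)|x-y|^{2-N}|h(y)|\,dy$, whose second factor need not be bounded in $x$. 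What is true, and what you would actually need, is the averaged statement $\int_{\{\delta<\tau\}}u\,dx=o(\tau)$ as $\tau\to0$, obtained for instance by testing against $\phi_\tau$ solving $\Delta^2\phi_\tau=\chi_{\{\delta<\tau\}}$ with Navier data, for which $|\phi_\tau|_{L^{\infty}(\Omega)}=o(\tau)$ by the boundary behaviour of the Green function; this contradicts $u\geq m>0$ a.e. since $|\{\delta<\tau\}|\geq c\tau$. You gesture at this in your final paragraph but do not carry it out, so the proposal remains a plan rather than a proof. Your appeal to the Weak Harnack Principle (Lemma \ref{WHP}) is also in the wrong direction: it bounds an $L^q$ norm from above by an essential infimum, not a pointwise quantity by an $L^1$ norm.

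For comparison, the paper's proof avoids any boundary decay analysis. It extends $u$ and $h$ by zero, mollifies, and works on subdomains $\omega\Subset\Omega$ where $\Delta^2u_n=h_n$ holds classically; integrating by parts against the solution of $\Delta^2\phi=1$ in $\omega$ with Navier data, and using that $\frac{\pa}{\pa n}(\Delta\phi)\geq0$ with $\int_{\pa\omega}\frac{\pa}{\pa n}(\Delta\phi)\,ds=|\omega|$ together with the assumed lower bound $u_n\geq\alpha\epsilon$ on $\pa\omega$, it obtains $\int_{\omega}h\phi\,dx-\int_{\omega}u\,dx\leq-\alpha\epsilon|\omega|$; exhausting $\Omega$ and then invoking the equation yields $0\leq-\alpha\epsilon|\Omega|$, a contradiction. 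If you wish to salvage your route, replace the pointwise decay by the integrated boundary-layer estimate described above.
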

\begin{proof}
Assume the lemma does not hold, that is there exists $\epsilon>0$ such that $u\geq \epsilon>0$ a.e. in $\Omega$. We extend $u$ and $h$ by $0$ in $\Rn\setminus \Omega$. Let $\rho_n$ denote the standard molifier. Define $u_n=u\star\rho_n$ and $h_n=h\star\rho_n$. Following the same argument as in \cite[Lemma 3.2]{DN}, we can show that, there exists $\alpha>0$ such that for $n$ large enough $u_n\geq \alpha\epsilon$ everywhere in $\Omega$ and given $\omega\Subset\Omega$ and $n$ large enough, $\Delta^2 u_n=h_n$ everywhere in $\omega$. Let $\phi$ solve the following:
\begin{equation}\label{app-phi}
\begin{cases}
\Delta^2\phi=1 \quad\text{in}\quad\omega,\\
\phi=0=\Delta\phi \quad\text{on}\quad\pa\omega.
\end{cases}
\end{equation}
Integrating by parts we obtain
$$\int_{\omega}u_n dx=\int_{\omega}u_n\Delta^2\phi dx=\int_{\omega}\Delta u_n\Delta\phi dx+\int_{\pa\omega}\frac{\pa}{\pa n}(\Delta\phi)u_n ds=\int_{\omega}h_n\phi dx+\int_{\pa\omega}\frac{\pa}{\pa n}(\Delta\phi)u_n ds.$$
Thus,
$$\int_{\omega}h_n\phi dx-\int_{\omega}u_n dx=-\int_{\pa\omega}\frac{\pa}{\pa n}(\Delta\phi)u_n ds\leq -\alpha\epsilon|\omega|,$$
since $\displaystyle\int_{\pa\omega}\frac{\pa}{\pa n}(\Delta\phi)ds=|\omega|$ (follows from \eqref{app-phi} after integrating by parts).
Since $u_n\to u$ in $L^1(\Omega)$, $h_n\to h$ in $L^1(\Omega)$ we get
$$\int_{\omega}h\phi dx-\int_{\omega}u dx\leq -\alpha\epsilon|\omega|.$$
Next choose $\omega=\omega_n:=\{x\in\Omega: \text{dist}(x,\bdw)>\frac{1}{n}\}$, $n\to\infty$. Let $\phi_n$ denote the corresponding solution to \eqref{app-phi} in $\omega_n$. Then $\phi_n\uparrow\phi$ where $\phi$ solves
\begin{equation*}
\begin{cases}
\Delta^2\phi=1 \quad\text{in}\quad\Omega,\\
\phi=0=\Delta\phi \quad\text{on}\quad\pa\Omega.
\end{cases}
\end{equation*}
Taking limit $n\to\infty$ in $\int_{\omega_n}h\phi_n dx-\int_{\omega_n}u dx\leq -\alpha\epsilon|\omega_n|$ and using $\Delta^2 u=h \quad\text{in}\quad\Omega$, we have $0\leq-\alpha\epsilon|\Omega|$. This gives a contradiction. 
\hfill$\square$
\end{proof}
\begin{Theorem}
Assume \eqref{assum-mu} is satisfied.  Then problem \eqref{a-n} has a nonnegative minimal solution for every $\la>0$.
\end{Theorem}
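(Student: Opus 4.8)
The plan is to reduce the assertion to the supersolution machinery already built in Lemma \ref{l:supersol}, exploiting crucially that the regularised data $a_n,b_n,f_n$ from Definition \ref{d:blow-up} are \emph{bounded}. The first thing I would record is that the truncated potential inherits the coercivity hypothesis \eqref{assum-a}: since $0\le a_n\le a$ pointwise we have $a_n(x)^2\le a(x)^2$, whence
\begin{equation*}
\Iom\left(|\Delta u|^2-a_n(x)^2u^2\right)dx\ge\Iom\left(|\Delta u|^2-a(x)^2u^2\right)dx\ge\ga\Iom u^2\,dx
\end{equation*}
for every $u\in W^{2,2}(\Omega)\cap W^{1,2}_0(\Omega)$. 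Thus $a_n$ satisfies \eqref{assum-a} with the \emph{same} constant $\ga$, and together with \eqref{assum-mu} the operator $\Delta^2-\mu a_n(x)$ enjoys exactly the invertibility and comparison properties (Lemma \ref{l:ex}, Lemma \ref{comp prin}) used for $\Delta^2-\mu a(x)$; in particular \eqref{mu-ga} and \eqref{tilde-ga} hold verbatim with $a$ replaced by $a_n$.

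Next I would produce an explicit supersolution, and here boundedness of the nonlinearity does all the work. Set $M_n:=\sup_{t\ge0}f_n(t)<\infty$; since $b_n$ is bounded and $\Omega$ is bounded, the datum $M_n+\la b_n$ lies in $L^2(\Omega)$ and is nonnegative (the degenerate case $M_n+\la b_n\equiv0$ forces $f_n\equiv0\equiv b_n$, for which $0$ is trivially the minimal nonnegative solution by \eqref{tilde-ga} applied to $a_n$). In the generic case, Lemma \ref{l:ex} applied to $\Delta^2-\mu a_n(x)$ yields a positive $\bar u\in W^{2,2}(\Omega)\cap W^{1,2}_0(\Omega)$ solving
\begin{equation*}
\Delta^2\bar u-\mu a_n(x)\bar u=M_n+\la b_n\quad\text{in }\Omega,\qquad \bar u=0=\Delta\bar u\quad\text{on }\bdw.
\end{equation*}
Because $f_n(\bar u)\le M_n$ pointwise, this gives $\Delta^2\bar u-\mu a_n(x)\bar u\ge f_n(\bar u)+\la b_n$, so $\bar u$ is a nonnegative supersolution of \eqref{a-n}, while the constant $0$ is an obvious subsolution.

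With a supersolution in hand I would run the monotone iteration of Lemma \ref{l:supersol} (with $a,b,f$ replaced by $a_n,b_n,f_n$): starting from the solution $u_0$ of $\Delta^2u_0-\mu a_n u_0=\la b_n$ and iterating $\Delta^2u_k-\mu a_n u_k=f_n(u_{k-1})+\la b_n$, the comparison principle produces a nondecreasing sequence $0\le u_0\le u_1\le\cdots\le\bar u$ which passes to the limit in the weak formulation and gives the minimal nonnegative solution $u_n$ of \eqref{a-n}. The one point requiring care — and the only real obstacle — is that Lemma \ref{l:supersol} is phrased for a \emph{convex} $f$, whereas $f_n$ is merely a member of an approximating family. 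Convexity, however, enters that proof \emph{solely} to ensure $f$ is nondecreasing, which is what makes the iteration monotone and comparison applicable. I would therefore note that the truncations are taken so that each $f_n$ is nondecreasing in its argument (e.g. $f_n(t)=\min(f(t),n)$, nondecreasing because $f$ is), so the argument of Lemma \ref{l:supersol} carries over unchanged; minimality follows by comparison against any other nonnegative supersolution, and positivity (when $b_n\not\equiv0$) from the strong maximum principle, Lemma \ref{SMP}. As this construction is valid for every $\la>0$, the theorem follows.
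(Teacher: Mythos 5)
Your proof is correct, but it reaches the required supersolution by a genuinely different route than the paper. You apply Lemma \ref{l:ex} directly to the truncated operator $\Delta^2-\mu a_n(x)$ with right-hand side $M_n+\la b_n$, where $M_n=\sup_{t\ge 0} f_n(t)$, after first checking that $a_n$ inherits \eqref{assum-a} with the same constant $\ga$ (correct, since $0\le a_n\le a$ gives $a_n^2\le a^2$). The paper instead proves an intermediate result (its Step 1): for the \emph{full} singular potential $a$ and bounded data $b_n\in L^{\infty}$, $f_n\in L^{\infty}\cap C$, the monotone iteration $\Delta^2 u_k=\mu a(x)u_{k-1}+f_n(u_{k-1})+\la b_n$ is uniformly bounded in $W^{2,2}(\Omega)\cap W^{1,2}_0(\Omega)$ --- the bound extracted directly from $|f_n|_{L^{\infty}}$ and $|b_n|_{L^{\infty}}$ via \eqref{assum-a} and Young's inequality, with no supersolution needed --- and hence converges to a solution $v_n$; then, since $a_n\le a$ and $v_n\ge 0$, $v_n$ is a supersolution of \eqref{a-n} and Lemma \ref{l:supersol} finishes. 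Your version is more economical (a single linear solve produces the supersolution), and your remark that convexity of $f$ enters Lemma \ref{l:supersol} only through monotonicity --- so that nondecreasing truncations such as $f_n=\min(f,n)$ suffice --- is exactly the point the paper also needs, but leaves tacit. What the paper's detour buys is the slightly stronger intermediate statement that $(P_{\la})$ with the untruncated potential is solvable for every $\la>0$ once $b$ and $f$ are bounded. Both arguments quietly assume $0\le a_n\le a$, which holds for the standard truncations.
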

\begin{proof}
{\bf Step 1:} Assume $a\in L^1_{loc}(\Omega)$ which satisfies \eqref{assum-a}.  Let $b\in L^{\infty}(\Omega)$ and $f\in L^{\infty}(\R^+)\cap C(\R^+) $ be nonnegative functions, $b\not\equiv 0$ and $\la>0$. Then there exists $u\in W^{2,2}(\Omega)\cap W^{1,2}_0(\Omega)$ such that $u$ solves $(P_{\la})$ for all $\la>0$.

To prove step 1, let $u_0\in W^{2,2}(\Omega)\cap W^{1,2}_0(\Omega)$ be a positive solution to 
\begin{equation*}
\begin{cases}
\Delta^2 u_0=\la b \quad\text{in}\quad\Omega,\\
u_0=0=\Delta u_0 \quad\text{on}\quad\bdw.
\end{cases}
\end{equation*} 
Since $\la b\in L^{\infty}(\Omega)\subset L^2(\Omega)$ we get $u_0\in W^{2,2}(\Omega)\cap W^{1,2}_0(\Omega)$. Next, using iteration we will show that there exists $u_n\in W^{2,2}(\Omega)\cap W^{1,2}_0(\Omega)$ for $n=1,2, \cdots$ such that $u_n$ solves the following problem:
\begin{equation}\label{app:un-1}
\begin{cases}
\Delta^2 u_n=\mu a(x)u_{n-1}+f(u_{n-1})+\la b(x) \quad\text{in}\quad\Omega,\\
u_n=0=\Delta u_n \quad\text{on}\quad\bdw.
\end{cases}
\end{equation} 
Thanks to \eqref{assum-a} and the assumptions that $f, b\in L^{\infty}(\Omega)$, it follows that $\mu a(x)u_0+f(u_0)+\la b(x)\in L^2(\Omega)$. Therefore $u_1$ is well defined. Moreover, by comparison principle $0<u_0\leq u_1$. Using the induction method, similarly we can show $u_n$ is well defined and $0<u_0\leq u_1\leq\cdots\leq u_n\leq\cdots$ . 

{\bf Claim:} $\{u_n\}$ is uniformly bounded in $W^{2,2}(\Omega)\cap W^{1,2}_0(\Omega)$.\\
To see this, lets note that from \eqref{app:un-1} we can write
\begin{equation}\label{app-un2}
|\Delta u_n|^2_{L^2(\Omega)} =\Iom\displaystyle\left(\mu a(x)u_{n-1}+f(u_{n-1})+\la b(x)\right)u_n dx .
\end{equation}
Using Holder inequality, \eqref{assum-a} and  Young's inequality, the terms on the RHS can be simplified as follows
$$\la\Iom bu_n dx\leq\la|b|_{L^{\infty}(\Omega)}|\Omega|^\frac{1}{2}|u_n|_{L^2(\Omega)}\leq\frac{C}{\sqrt{\ga}}|b|_{L^{\infty}(\Omega)}|\Delta u_n|_{L^2(\Omega)}\leq \epsilon|\Delta u_n|^2_{L^2(\Omega)}+c(\epsilon)|b|^2_{L^{\infty}(\Omega)}.$$
\begin{eqnarray*}
\Iom f(u_{n-1})u_n dx \leq |f|_{L^{\infty}(\Omega)}|\Omega|^\frac{1}{2}|u_n|_{L^2(\Omega)} &\leq& \frac{C}{\sqrt{\ga}}|f|_{L^{\infty}(\Omega)}|\Delta u_n|_{L^2(\Omega)}\\
&\leq& \epsilon|\Delta u_n|^2_{L^2(\Omega)}+c(\epsilon)|f|^2_{L^{\infty}(\Omega)}. 
\end{eqnarray*}
$$\mu\Iom a(x)u_{n-1}u_n dx\leq\mu\Iom a(x)u_n^2dx\leq\mu |a(x)u_n|_{L^2(\Omega)}|u_n|_{L^2(\Omega)}\leq \frac{\mu}{\sqrt{\ga}}|\Delta u_n|^2_{L^2(\Omega)}. $$
Since $\frac{\mu}{\sqrt{\ga}}<1$, we can choose $\epsilon>0$ such that $2\epsilon+\frac{\mu}{\sqrt{\ga}}<1$. Substituting this $\epsilon$ in above three inequalities and combining them with \eqref{app-un2}, we have
$$|\Delta u_n|^2_{L^2(\Omega)}\leq C(|b|_{L^{\infty}(\Omega)}+|f|_{L^{\infty}(\Omega)}).$$ 
This proves the claim. As a consequence there exists  $u\in W^{2,2}(\Omega)\cap W^{1,2}_0(\Omega)$ such that upto a subsequence $u_n\deb u$ in $W^{2,2}(\Omega)\cap W^{1,2}_0(\Omega)$ and $u_n\to u$ in $L^2(\Omega)$. Therefore we can conclude the theorem as we did in Lemma \ref{l:supersol}.

{\bf Step 2:} Let  $\{b_n(x)\}$ and $\{f_n\}$ be increasing sequence of bounded functions converging pointwise respectively to  $b(x)$ and $f$ ($f_n$ is continuous for $n=1,2, \cdots$ ). Then by Step 1, there exists a nonnegative minimal solution $v_n\in W^{2,2}(\Omega)\cap W^{1,2}_0(\Omega)$ of the following problem:
\begin{equation}\label{app:a-n3}
\begin{cases}
\Delta^2 v_n-\mu a(x)v_n=f_n(v_n)+\la b_n(x) \quad\text{in}\quad\Omega,\\
v_n=0=\Delta v_n \quad\text{on}\quad\bdw.
\end{cases}
\end{equation}
Clearly $v_n$ is a nonnegative supersolution to \eqref{a-n}. Therefore the theorem follows from Lemma \ref{l:supersol}.

\hfill$\square$
\end{proof}

{\bf Final Remark:}  The results of this paper can be easily extended to the equations of the form  $$\Delta^2 u-\mu a(x)u=c(x)f(u)+\la b(x) \quad\text{in}\quad\Omega,$$ where $c\in L^1_{loc}(\Omega)$ is a nonnegative function. In particular, \eqref{ex-cond} will be changed to
$$c f(\epsilon\zeta_1)\in L^2(\Omega) \quad\text{and}\quad G(c(x)f(\epsilon\zeta_1))\leq C\zeta_1.$$

\vspace{2mm}

{\bf Acknowledgement}: The author would like to gratefully thank Dr. Anup Biswas for many fruitful discussions that led to various results in this work. 
This work is supported by INSPIRE research grant DST/INSPIRE 04/2013/000152.

\label{References}


\begin{thebibliography}{XX}
\bibitem{A}{V. Adolfsson},  $L^2$- integrability of second-order derivatives for Poisson’s equation in nonsmooth domains. \textit{Math. Scand.} {\bf 70} (1), (1992), 146--160.

\bibitem{AGGM}{G.  Arioli; F. Gazzola; H-C. Grunau; E. Mitidieri}, A semilinear fourth order elliptic problem with exponential nonlinearity. \textit{SIAM J. Math. Anal.}, {\bf 36}, (2005), no. 4, 1226--1258. 

\bibitem{BG} {E. Berchio; F. Gazzola}, Some remarks on biharmonic elliptic problems with positive, increasing and convex nonlinearities. \textit{Electron. J. Differential Equations}, No 34, 2005, 20 pp.

\bibitem{BDT} {H. Brezis; L. Dupaigne; A. Tesei}, On a semilinear elliptic equation with inverse-square potential. \textit{Selecta Math. (N.S.)}, {\bf 11}, (2005), no. 1, 1--7.

\bibitem{BV}{ H. Brezis; J. L. V{\'a}zquez}, Blow-up solutions of some nonlinear elliptic problems. \textit{Rev. Mat. Univ. Complut. Madrid}, {\bf 10} (1997), no. 2, 443--469.

\bibitem{DDGM} {J. D{\'a}vila; L. Dupaigne; I. Guerra; M. Montenegro}, Stable solutions for the bilaplacian with exponential nonlinearity. \textit{SIAM J. Math. Anal.}, {\bf 39}, (2007), no. 2, 565--592.

\bibitem{D} {L. Dupaigne}, A nonlinear elliptic PDE with the inverse square potential, \textit{ J. Anal. Math.}, {\bf 86}, (2002), 359--398.

\bibitem{DN} {L. Dupaigne; G. Nedev}, Semilinear elliptic PDE's with a singular potential,  \textit{Adv. Differential Equations} {\bf 7}, (2002), no. 8, 973--1002. 

\bibitem{EGP}{ C. Escudero; F. Gazzola;  I. Peral}, Global existence versus blow-up results for a fourth order parabolic PDE involving the Hessian, \textit{J. Math. Pures Appl.} {\bf 103}, (2015), 924--957.

\bibitem{GGM} {F. Gazzola;  H. C.  Grunau; E. Mitidieri}, {Hardy inequalities with optimal constants and remainder terms}, {\textit Trans. Amer.
Math. Soc.}, {\bf 356} (6), (2004), 2149--2168.

\bibitem{GGS} {F. Gazzola; H. C. Grunau; G. Sweers} Polyharmonic boundary value problems. Positivity preserving and nonlinear higher order elliptic equations in bounded domains. \textit{Lecture Notes in Mathematics}, (1991). Springer-Verlag, Berlin, 2010.

\bibitem{PP}{M. P{\'e}rez-Llanos;  A. Primo}, Semilinear biharmonic problems with a singular term, \textit{J. Differential Equations} {\bf 257}, (9), (2014), 3200--3225.

\bibitem{Rel54}
{F. Rellich}, 
Halbbeschr\"ankte Differentialoperatoren h\"oherer Ordnung. 
In: J.C.H. Gerretsen, J. de Groot (Eds.): Proceedings of the International Congress of Mathematicians (1954), Volume III,  243--250. Groningen: Noordhoff 1956.

\bibitem{Rel69}
{F. Rellich},
{Perturbation theory of eigenvalue problems}, \textit{New York: Courant Institute of Mathematical Sciences, New York University}, 
 (1954).

\bibitem{TZ} {A. Tertikas; N. B. Zographopoulos}, Best constants in the Hardy-Rellich inequalities and related improvements. \textit{Adv. Math. }, {\bf 209}, (2007), no. 2, 407--459.

\end{thebibliography}
\end{document}